\def\paragraph#1{{\bf #1\ }}
\def\NN{\mbox{\rm I\hspace{-0.50ex}N} }
 \def\OO{\rm \hbox{O\kern-.34em\raise.47ex
         \hbox{$\scriptscriptstyle |$}\kern-.46em\raise.47ex
         \hbox{$\scriptscriptstyle |$}\kern+0.5 em }}
\def\RR{\mbox{\mathrm I\hspace{-0.50ex}R} }
\def\ZZ{\mbox{\mathsf Z\hspace{-0.80ex}Z} }
\def\hcboxcm#1#2{\hbox to #1{\hfill #2 \hfill}}
\def\null{\hbox{}}
\def\tn1{\widetilde n_1}
\def\tn2{\widetilde n_2}
\def\tn{\widetilde n }
\let\ds\displaystyle
\def\be{\begin{equation}}
\def\ee{\end{equation}}
\def\bea{\begin{eqnarray}}
\def\eea{\end{eqnarray}}
\def\bean{\begin{eqnarray*}}
\def\eean{\end{eqnarray*}}
\def\RR{{\mathrm{ I~\hspace{-1.15ex}R}}}
\def\NN{{\mathrm{ I~\hspace{-1.15ex}N}}}
\def\qquad{{\quad\quad}}
\def\={\, = \, }
\def\T{{\cal T}}
\def\Box{\leavevmode\vbox{\hrule
     \hbox{\vrule\kern4pt\vbox{\kern4pt}%
           \vrule}\hrule}}
\def\blackbox{\leavevmode\vrule height 5pt width 4pt depth 0pt\relax}
\def\eqalign#1{\null\,\vcenter{\openup1\jot \m@th
   \ialign{\strut \hfil$\displaystyle{##}$ & $\displaystyle{{}##}$\hfil
      \crcr#1\crcr}}\,}
\def\eqalignrll#1{\null\,\vcenter{\openup1\jot \m@th
   \ialign{\strut \hfil$\displaystyle{##}$ & $\displaystyle{{}##}$\hfil
    & $\displaystyle{{}##}$\hfil
      \crcr#1\crcr}}\,}
\def\eqalignrcl#1{\null\,\vcenter{\openup1\jot \m@th
   \ialign{\strut \hfil$\displaystyle{##}$ &\hfil $\displaystyle{{}##}$\hfil
    & $\displaystyle{{}##}$\hfil
      \crcr#1\crcr}}\,}
\def\eqalignno#1{\displ@y \tabskip\@centering
  \halign to\displaywidth{\hfil$\@lign\displaystyle{##}$\tabskip\z@skip
    &$\@lign\displaystyle{{}##}$\hfil\tabskip\@centering
    &\llap{$\@lign##$}\tabskip\z@skip\crcr
    #1\crcr}}
\newcounter{appendix}
\newcounter{sectionz}
\def\appendix{\advance\c@appendix by 1
   \def\thesectionz {\Alph{appendix}}
\def\thesection{\Alph{appendix}} 
   \ifnum\c@appendix=1 \setcounter{section}{-1} \fi
   \@startsection {section}{1}{\z@}{-3.5ex plus -1ex minus 
  -.2ex}{2.3ex plus .2ex}{\large\bf}}
\newtheorem{lemme}{Lemma}[section]  
\newtheorem{theorem}[lemme]{Theorem}
\newtheorem{corollary}[lemme]{Corollary}
\newtheorem{definition}[lemme]{Definition}
\newtheorem{proposition}[lemme]{Proposition}
\newtheorem{remark}[lemme]{Remark} 
\def\deblem{\begin{lemme}\it }
\def\finlem{\end{lemme}}
\def\debthm{\begin{theorem}\it }
\def\finthm{\end{theorem}}
\def\debprop{\begin{proposition} \it}
\def\finprop{\end{proposition}}
\def\debcor{\begin{corollary}\it }
\def\fincor{\end{corollary}}
\def\debdef{\begin{definition}\it}
\def\findef{\end{definition}}
\def\debrem{\begin{remark}\em}
\def\finrem{\null\hfill\blackbox\end{remark}}
\def\debproof{{\bf Proof: \ }}
\def\finproof{\null\hfill {$\blackbox$}\bigskip}  
\def\NN{\mathbb{N}}
\def\OO{\mathbb{O}}
\def\RR{\mathbb{R}}
\def\ZZ{\mathbb{Z}}
\def\bfu{\mathbf{u}}
\def\bfv{\mathbf{v}}
\def\bfx{\mathbf{x}}
\def\bfE{\mathbf{E}}
\def\bfB{\mathbf{B}}
\newcommand{\converge}[1]{%
\ensuremath{\displaystyle{\smash{\,\mathop{\rightharpoonup}\limits_{\epsilon \to0}^{#1}\,}}}}
\title[Anisotropic Vlasov equation]{Numerical study of an anisotropic Vlasov equation arising in plasma physics}
\author[B. Fedele, C. Negulescu]{Baptiste Fedele, Claudia Negulescu}
\address{Universit\'e de Toulouse \& CNRS, UPS, Institut de Math\'ematiques de Toulouse UMR 5219, F-31062 Toulouse, France.}
\email{baptiste.fedele@math.univ-toulouse.fr\\
claudia.negulescu@math.univ-toulouse.fr}
\date{\today}
\begin{document}

\maketitle

\begin{abstract}
\noindent 
Goal of this paper is to investigate several numerical schemes for the resolution of two anisotropic Vlasov equations. These two toy-models are obtained from a kinetic description of a tokamak plasma confined by strong magnetic fields. The simplicity of our toy-models permits to better understand the features of each scheme, in particular to investigate their {\it asymptotic-preserving} properties, in the aim to choose then the most adequate numerical scheme for upcoming, more realistic simulations. 
\end{abstract}

\bigskip

\keywords{{\bf Keywords:} Plasma modelling, kinetic equations, gyro-kinetic equations, asymptotic limit, numerical schemes, simulation, asymptotic-preserving schemes.}

\section{Introduction} \label{SEC1}
The present paper addresses a new approach for an efficient numerical resolution of anisotropic transport models, which simplified are of the type
\be \label{MODEL}
\left\{
\begin{array}{l}
\ds \partial_t f^\epsilon + { \bfu \over \epsilon} \cdot   \nabla f ^\epsilon=0\,, \qquad \forall (t,x,y) \in [0,T] \times \Omega\,,\\[3mm]
\ds f^\epsilon(0,x,y)=f_{in}(x,y)\,,
\end{array}
\right.
\ee
subject to appropriate boundary conditions (here periodic ones). The unknown $f^\epsilon$ stands for the quantity (distribution function) which is advected along the given (or self-consistently computed) field $\bfu$ in the domain $\Omega:=[0,L_x] \times [0,L_y]$ and the small scaling parameter $\epsilon \ll 1$ indicates that we have to deal with very strong advection fields $\bfu$ or equivalently  with the long-time asymptotics of $f^\epsilon$. Such anisotropic transport models arise very often in physics, as simplifications of more complex systems. In Section \ref{SEC10} we detail some examples coming from plasma physics, as the Vlasov equation for the ion dynamics in the gyrokinetic regime. There are however several other examples arising in physics and leading to a simplified transport equation as \eqref{MODEL}, for example when one studies the long-time asymptotics of the incompressible Euler 2D equations, \eqref{MODEL} representing then the vorticity equation, which has to be coupled (via $\bfu$) with a Poisson equation for the stream-function computation \cite{Majda}.\\

A numerical resolution of problems of the type \eqref{MODEL} is rather challenging in the regime $\epsilon \ll 1$, due to the singularity of the mathematical problem as $\epsilon \rightarrow 0$. Certainly, the exact solution of the simple transport-case \eqref{MODEL} is known for $\epsilon >0$, however not in general situations, when $\bfu$ is self-consistently computed via $f^\epsilon$ and when other (not-stiff) terms are present. These general situations require then an efficient numerical treatment of \eqref{MODEL}. From a physical point of view we can say that we have to cope with a multiscale problem, the parameter $\epsilon$ being the stiffness parameter. Standard schemes (explicit hyperbolic approaches) require very restrictive CFL-conditions (dependent on $\epsilon$) in order to accurately account for the microscopic $\epsilon$-scales. Very often in such situations people are impliciting the stiff term \cite{filjin2}, in order to avoid these too restrictive CFL-conditions. This can work in some situations, for example when the grid is aligned with the anisotropy, and only for a certain range of $\epsilon$-values. However in more general configurations, not-aligned grids and $\epsilon$-values covering all the interval $[0,1]$, impliciting the stiff term is no more sufficient, as shall be seen in this paper. We propose thus in this work a new numerical procedure, based on {\it Asymptotic-Preserving} arguments, being able to solve \eqref{MODEL} in an efficient manner, uniformly accurate and stable in $\epsilon$, and this on a simple, Cartesian grid. {\it Asymptotic-Preserving} methods are efficient, as they are designed in order to mimic on the discrete level the asymptotic behavior of the singularly perturbed problem solutions (see \cite{Jin_rev,PE} for a detailed introduction).\\
This paper was initiated by the repetitive remarks/questions one of the authors got during conferences, meaning that impliciting the stiff term in \eqref{MODEL} is enough to get an efficient AP-scheme, which behaves well even in the limit $\epsilon \rightarrow 0$. The aim of this paper is to prove the contrary, AP-schemes are more than impliciting the stiff term. In order to understand in detail the main features of the here proposed AP-scheme, we preferred to keep the investigated model as simple as possible, so that a detailed numerical analysis is possible, permitting to perceive the differences of our scheme when compared to standard (implicit) schemes. We hope that doing so, we are able to resolve some of the confusion that surround AP-schemes. However, even if the here presented results are based on a simplified model as \eqref{MODEL}, the same {\it Asymptotic-Preserving} approach can be used for more involved anisotropic transport problems, such as those presented in Section \ref{SEC10} and which shall be the objective of an upcoming work. \\
The AP-procedure we propose here was employed in other contexts by the authors (elliptic \cite{DDLNN,DLNN}, parabolic \cite{MN}). The present setting is more stimulating, as we have to cope with highly oscillating problems when $\epsilon \ll 1$ and no more dissipative ones. In the present oscillating case, the limit (weak) $\epsilon \rightarrow 0$ is more challenging, and has to be defined adequately. We refer the reader to \cite{NL,NICOO,lm} for other AP-scheme references.\\

This paper is laid as follows. Section \ref{SEC10} deals with the presentation of a physical situation leading, after scaling and simplification,  to the anisotropic transport equation \eqref{MODEL}. Two simplified models which will be studied in the following, are presented. Section \ref{SEC2} reviews the mathematical framework necessary to study the first toy model, and investigates the asymptotic limit $\epsilon \to 0$. Section \ref{SEC21} introduces several numerical schemes that we shall apply for the resolution of the first toy model. Then, we present the numerical results obtained with these schemes in Section \ref{SEC24} and the numerical analysis in Section \ref{SEC25}. The last section is dedicated to the mathematical and numerical study of the second toy model which considers variable coefficients. A conclusion gives some hints for our upcoming work, concerning the more realistic Vlasov equation \eqref{V_C}.

\section{Physical motivation and toy models} \label{SEC10}
Let us shortly say here some words about the physical motivation of the present work and introduce the two simplified models we shall investigate numerically in the next sections. These simplified models are caricatures of typical asymptotic regimes encountered in plasma physics, as for example the gyro-kinetic regime, and contain all the numerical difficulties arising in the more complex real physical systems.\\

The core tokamak plasma can be considered as collisionless, such that  the most appropriate model for the description of its dynamics is the Vlasov equation for each particle species ($\alpha=e$ for electrons and $\alpha=i$ for ions), {\it i.e.} 
\begin{equation} \label{Boltz}
\partial_t f_{\alpha} + \bfv \cdot \nabla_x f_{\alpha} + \frac{e_{\alpha}} {m_\alpha} (\bfE+ \bfv \times \bfB) \cdot \nabla_v f_{\alpha} =0\,,
\end{equation}
where $e_\alpha = \pm e$ resp. $m_\alpha$ are the particle elementary charge resp. mass and $\bfE(t,\bfx)$ resp. $\bfB(t,\bfx)$ are the electric respectively magnetic fields, determined self-consistently from Maxwell's equations. In the electrostatic case (given field $\bfB$), Maxwell's equations have to be replaced by Poisson's equation
\be \label{Poi}
- \epsilon_0 \Delta \Phi = \rho\,, \qquad \rho(t,\bfx):=\sum_{\alpha} e_\alpha \int_{\RR^3} f_{\alpha}(t,\bfx,\bfv)\, d\bfv\,,
\ee
where $\Phi$ is the electrostatic potential, related to the electric field $\bfE$ by $\bfE(t,\bfx)= - \nabla \Phi(t,\bfx)$. For more details about the modelling of magnetically confined fusion plasmas, we refer the interested reader to the textbooks \cite{CHENF,hazeltine_meiss,Ruther}.\\

From a numerical point of view, solving the system \eqref{Boltz}-\eqref{Poi} is rather arduous, due among others to its high dimensionality ($6$ dimensional in the phase space $(\bfx,\bfv)$) and to the presence of several time and space scales in the dynamics, introduced for ex. by the strong magnetic field $\bfB$ which confines the plasma in the tokamak. We shall be concerned in the present work with the multi-scale aspects of the kinetic problem, difficulties which are described mathematically by the following rescaling of the Vlasov equation for the ions (see \cite{mb,Xav,Grand,Max,CN} for the gyrokinetic scaling)
\be \label{V_C}
\partial_t f + \bfv \cdot \nabla_x f + \Big[\bfE + {1
  \over \epsilon} \left( \bfv \times \bfB \right)\Big] \cdot \nabla_v f = 0\,,\ee
where $\epsilon$ stands for the ratio of the particle cyclotron period to the observation time. The electrons experience the appearance of a second small parameter, related to the small electron to ion mass ratio $m_e/m_i$, leading to additional numerical burden, we shall not consider here (see \cite{DNP}).
The effect of the intense magnetic field on the particle dynamics is that it introduces a strong anisotropy, the motion of the charged particles being splitted into a fast gyration around the magnetic field lines and a slow dynamics along these lines, separation which necessarily causes numerical complications.\\


Let us introduce now two simplified toy models, which contain all the numerical difficulties of the initial model.
In the rest of this paper we shall consider a homogeneous magnetic field $\bfB= b\, {\EuFrak b} $ with fixed direction ${\EuFrak b}:=e_z$ and constant magnitude $|\bfB| =b\equiv 1$. Furthermore, let us also introduce the following notation
$$
\bfv_{||}= (0,0,v_z)^t\,, \quad
\bfv_{\perp}=(v_x,v_y,0)^t\,, \quad ^\perp \bfv:=(v_y,-v_x,0)^t=\bfv \times \bfB\,.
$$
Sometimes it is more convenient to shift in \eqref{V_C} from Cartesian coordinates to polar coordinates for the velocity, {\it i.e.}
$$
v=(v_x,v_y,v_z) \Leftrightarrow (r,\theta,v_z)\,, \quad \left\{
\begin{array}{l}
v_x:=r \cos(\theta)\\
v_y:=r \sin(\theta)
\end{array}
\right.\,, \quad
\begin{array}{l}
\theta \in [0, 2 \pi)\\
r \ge 0
\end{array}\,.
$$
The Vlasov equation \eqref{V_C}, written in polar coordinates, has then the form
\be \label{V_P}
\begin{array}{l}
\partial_t F + v_z \partial_{z} F + E_z \partial_{v_z} F +\left(E_x
  \cos\theta + E_y \sin\theta\right) \partial_r F- \ds{1
  \over r } \left(E_x
  \sin\theta - E_y \cos\theta  \right)\partial_{\theta} F \\[3mm]
\hspace{4cm} \ds + r \left( \cos\theta \partial_x F +
  \sin\theta \partial_y F \right) - { 1 \over
  \epsilon} \partial_{\theta} F =0\,,
\end{array}
\ee
where the unknown is now $F(t,x,y,z,r,\theta,v_z)$.\\

The two formulations, \eqref{V_C} resp. \eqref{V_P}, corresponding to a Cartesian (not field-aligned) resp. polar (field-aligned) configuration, are different from a numerical point of view, and different numerical schemes are usually employed for their resolution. To understand this difference better, we shall investigate in the present work in detail some numerical schemes for simplified versions of \eqref{V_C} and \eqref{V_P}.  We deliberately simplified these equations in order to be able to do a complete numerical analysis and to understand in all details the features of the here introduced AP-schemes.

\subsection{First toy model - Polar, field-aligned configuration} \label{SEC101}
Let us start from the Vlasov equation \eqref{V_C}, assume here that $\bfE \equiv 0$, $\bfB=e_z$ and consider furthermore only the dynamics in the perpendicular plane $(x,y)$, {\it i.e.}
\begin{equation} \label{Vlas4D}
\partial_t f + \bfv_{\perp} \cdot \nabla_x  f + { 1 \over \epsilon} (\bfv \times \bfB) \cdot \nabla_v f =0\,,
\end{equation}
where $\epsilon \ll 1$ accounts as usual for very strong magnetic fields.
In order to simplify the computations, one often shifts to  polar coordinates for the velocity, leading to 
\be \label{V_P_bis}
\begin{array}{l}
\partial_t F + r\, \cos\theta \,\partial_x F + r\,
  \sin\theta\, \partial_y F  - \ds{ 1 \over
  \epsilon}\, \partial_{\theta} F =0\,,
\end{array}
\ee
where the unknown now is $F(t,x,y,r,\theta)$. We recognize thus a simple $3D$ anisotropic transport equation, the variable $r$ being considered as a parameter in \eqref{V_P_bis}.\\

Choosing an initial condition $F_{in}$ independent on the variable $y$, would even lead to a more simpler $2D$ transport model
\be \label{ADV_2D}
\partial_t F + r\, \cos\theta \,\partial_x F - { 1 \over
  \epsilon}\, \partial_{\theta} F =0\,.
\ee
This problem represents the simplest example of an anisotropic advection equation, to be understood in detail before designing an efficient scheme for the resolution of the Vlasov equation in the gyrokinetic regime \eqref{V_C}. It is sufficiently difficult in order to study the behavior of the various schemes we shall introduce, and shall be the starting point of Section \ref{SEC2}.

\subsection{Second toy model - Cartesian, not field-aligned configuration} \label{SEC102}
In this second part, we shall differently simplify our Vlasov equation in order to study a different behavior. In particular, setting $\bfE \equiv 0$, $\bfB \equiv e_z$ and taking an initial condition independent on the space variable, yields the following 2D equation, in Cartesian coordinates
\begin{equation} \label{Vlas4D}
\partial_t f + { 1 \over \epsilon} (\bfv \times \bfB) \cdot \nabla_v f =0\,,
\end{equation}
or equivalently
\begin{equation} \label{Vlas4D_bis}
\partial_t f + { v_y \over \epsilon} \partial_{v_x} f -{ v_x \over \epsilon} \partial_{v_y} f =0\,.
\end{equation}

\noindent The difference of this model to the previous one is that this time the characteristics are no more straight lines but curves, such that the numerical schemes will behave differently. As mentioned earlier, these two models correspond to simplified versions of a field-aligned, polar coordinate framework
, as well as a not field-aligned, Cartesian framework, both associated to the Vlasov equation \eqref{V_C} in the gyro-kinetic regime.

\subsection{Aim of the present paper} \label{SEC103}
The main points we are interested in within this study are the following:
\begin{itemize}
\item design of AP-schemes for an efficient numerical resolution of anisotropic Vlasov equations of type \eqref{ADV_2D}, \eqref{Vlas4D}. Important properties we are asking from the schemes are: (a) stability independent on $\epsilon$; (b) numerical diffusion/accuracy independent on $\epsilon$; (c) discretization of the limit model as $\epsilon \rightarrow 0$; 
\item show that taking the stiff term ${ 1 \over \epsilon} (\bfv \times \bfB) \cdot \nabla_v f$ in \eqref{V_C} implicitly is not sufficient for having an AP-scheme, meaning that AP-schemes are more than taking ``implicitly'' the suitable terms. AP-schemes have to mimic at the discrete level the precise asymptotic behavior of the solution in the limit $\epsilon \rightarrow 0$;
\item perform a detailed numerical analysis of the presented schemes in the framework of the two simplified toy-models \eqref{ADV_2D}, \eqref{Vlas4D} and identify exactly which are the particularities of each scheme and each equation;
\item understand the difference between a field-aligned framework \eqref{ADV_2D} and a Cartesian framework \eqref{Vlas4D}, and this from a numerical point of view;
\item prepare the foundation for a future, more realistic work, dealing with the resolution of the initial Vlasov equation \eqref{V_C} in the gyro-kinetic regime.
\end{itemize}


Finally, let us say some words about {\it Asymptotic-Preserving} schemes. In general, inaccuracy in numerical simulations can result from applying unstable algorithms to well-conditioned problems or stable algorithms to ill-conditioned problems. 
Dealing with singularly-perturbed problems is a hard task, as they are ill-conditioned from the beginning. A standard, stable discretization (implicit in this case) often results in inaccurate results. The essence of AP-procedures is to replace singularly-perturbed problems by equivalent problems, which are regularly perturbed, well-conditioned problems, leading to uniformly accurate results, if stable algorithms are used (AP-approach).
\section{First anisotropic Vlasov toy model and its mathematical study} \label{SEC2}
Let us investigate now in detail the following simplified toy model, corresponding to a field-aligned anisotropic Vlasov equation
\be \label{AV}
(V)_\epsilon\,\,\, 
\left\{
\begin{array}{l}
\ds \partial_t f^\epsilon + a\, \partial_x f^\epsilon + { b \over \epsilon}\,  \partial_y f ^\epsilon=0\,, \quad \forall (t,x,y) \in [0,T] \times [0,L_x] \times [0,L_y]\,,\\[3mm]
\ds f^\epsilon(0,x,y)=f_{in}(x,y)\,,
\end{array}
\right.
\ee
where $f_{in}$ is a given initial condition, $a>0$ and $b>0$ are for the moment constants  and $0<\epsilon \ll 1$ is a parameter representing the strong anisotropy/stiffness of the problem. Our computational domain is a doubly periodic box $\Omega :=[0,L_x] \times [0,L_y]$.\\
We shall review here some  standard numerical schemes as well as introduce some new ones for the resolution of such a singularly perturbed problem and discuss finally their advantages and disadvantages. In particular, one is interested in numerical schemes capable to solve \eqref{AV} uniformly accurate in $\epsilon$, so-called ``Asymptotic-Preserving'' schemes.\\
Let us however start with a detailed mathematical study of the behavior of \eqref{AV}.

\subsection{Singularly perturbed problem} \label{SEC20}
Equation  \eqref{AV} is a simple advection problem, whose exact solution is given by the characteristic method, {\it i.e.}
\be \label{SOL}
f^\epsilon(t,x,y)=f_{in}(x-at,y-{b  \over \epsilon}\, t)\,, \quad \forall (t,x,y) \in [0,T] \times \Omega\,.
\ee
\noindent Remark that this function is $L_x$-periodic in the variable $x$, $L_y$-periodic in the variable $y$. Concerning the time-variable, two time-scales are present in the problem, a slow time-scale $t$ and a rapid one $t/\epsilon$.\\

\noindent The term ${b \over \epsilon} \partial_y f^\epsilon$ in \eqref{AV} is the dominant term in the case where $\epsilon \ll 1$, such that passing formally to the limit $\epsilon \rightarrow 0$, yields
\begin{align}
(R)\,\,\,
\left\lbrace
\begin{array}{ll}
 \partial_y f = 0, \quad \forall(t,x,y) \in [0,T] \times [0, L_x] \times [0,L_y],  \\ \\
f(0,x,y) = f_{in}(x,y).
\end{array}\right.
\label{Red}
\end{align}

\noindent This system, called "reduced system", is ill-posed. Depending on the initial condition, it can admit or an infinite number of solutions, namely if $\partial_y f_{in} =0$, or no regular solution (if $\partial_y f_{in} \neq 0$). From a numerical point of view, this ill-posedness in the limit is translated into the singularity of the matrix of the linear system obtained by discretization of this problem. In particular, trying to solve \eqref{AV} in a standard manner will necessarily lead to a linear system which degenerates in the limit $\epsilon \rightarrow 0$. This shall induce sever numerical problems.\\
More adequate schemes are hence necessary for an efficient resolution of \eqref{AV}, as for example ``Asymptotic-Preserving'' schemes which are uniformly stable and accurate independently on the small parameter $\epsilon$, and are additionally able to capture the limit model as $\epsilon \rightarrow 0$.

\subsection{Limit model} \label{SEC29}
For a better comprehension of our singularly-perturbed problem as well as for the construction of efficient ``Asymptotic-Preserving'' schemes, we have to identify the limit problem $(V)_0$ of \eqref{AV} and its solution denoted by $f^0$. The information we get from the reduced model is that the limit-function $f^0$ has to be $y$-independent. With this information we introduce now the average of the function $f^{\epsilon}$ with respect to the direction $y$
\begin{align*}
\bar{f}^{\epsilon}(t,x) := \frac{1}{L_y} \int_0^{L_y} f^{\epsilon}(t,x,y) dy.
\end{align*}

\noindent Integration of  the equation \eqref{AV} with respect to $y$ yields $\partial_t \bar{f}^{\epsilon} + a \partial_x \bar{f}^{\epsilon} = 0$, which is an $\epsilon$-independent problem. Passing then to the limit $\epsilon \rightarrow 0$ leads to the advection equation
\begin{align}
(V)_{0}\,\,\,
\left\lbrace
\begin{array}{ll}
\ds \partial_t {f}^{0} + a \partial_x {f}^{0} = 0\,, \quad\forall (t,x) \in [0,T] \times [0,L_x], \\[3mm]
\ds {f^{0}}(0,x) = \bar{f}_{in}(x)\,, \quad \forall x \in [0,L_x]\,,
\end{array}\right.
\label{eq4}
\end{align}
with solution 
$$
{f^{0}}(t,x) = \bar{f}_{in}(x-at)\,, \quad \forall (t,x) \in [0,T] \times [0,L_x]\,.
$$
The system $(V)_0$ is what we call ``limit-system'' of the anisotropic Vlasov equation $(V)_\epsilon$, as shall be proved in the next section.

 \subsection{Weak convergence} \label{SEC29}
 So far, we proved the existence of a unique solution $f^\epsilon$ for the system $(V)_{\epsilon}$ resp. $f^0$  for the limit system $(V)_0$. The next step is now to show the weak-convergence of $f^\epsilon$ towards $f^0$ as $\epsilon \rightarrow 0$, and this in a certain sense. To define this sense, we have to introduce the right mathematical framework. In the sequel the symbol $\sharp$ shall underline the periodicity of the considered space.\\

\begin{theorem} \label{THM_weak}
Let the initial condition $f_{in} \in H^1_\sharp(\Omega)$. Then the unique solutions to $(V)_{\epsilon}$ resp. $(V)_0$ satisfy $f^\epsilon \in W^{1,\infty}(0,T; L^2_\sharp(\Omega)) \cap L^{\infty}(0,T; H^1_\sharp(\Omega))$ resp. $f^0 \in W^{1,\infty}(0,T; L^2_\sharp(0,L_x)) \cap L^{\infty}(0,T; H^1_\sharp(0,L_x))$. Moreover, we have the weak-$\star$ limit
\begin{equation} \label{Weak}
f^\epsilon \converge{*} f^0 \quad \textrm{in} \,\,\, L^{\infty}(0,T; L^2_\sharp(\Omega))\,. 
\end{equation}
\end{theorem}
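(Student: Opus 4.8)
The plan is to read off everything from the explicit representation \eqref{SOL}, using the double periodicity of $\Omega$ in an essential way. For the regularity of $f^\epsilon$, note that the translation $(x,y)\mapsto(x-at,\,y-\tfrac{b}{\epsilon}t)$ acts as an isometry on both $L^2_\sharp(\Omega)$ and $H^1_\sharp(\Omega)$; hence \eqref{SOL} gives $\|f^\epsilon(t)\|_{H^1_\sharp(\Omega)}=\|f_{in}\|_{H^1_\sharp(\Omega)}$ for every $t$, which is the $L^\infty(0,T;H^1_\sharp(\Omega))$ bound. From the equation, $\partial_t f^\epsilon=-a\,\partial_x f^\epsilon-\tfrac{b}{\epsilon}\,\partial_y f^\epsilon\in L^2_\sharp(\Omega)$ with $t$-uniform norm bounded (for fixed $\epsilon$) by $(a+b/\epsilon)\|f_{in}\|_{H^1_\sharp(\Omega)}$, giving $f^\epsilon\in W^{1,\infty}(0,T;L^2_\sharp(\Omega))$. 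The same two arguments applied to $f^0(t,x)=\bar f_{in}(x-at)$ yield its regularity, once one observes $\bar f_{in}\in H^1_\sharp(0,L_x)$ with $\|\bar f_{in}\|_{H^1(0,L_x)}\le L_y^{-1/2}\|f_{in}\|_{H^1(\Omega)}$, a direct consequence of Cauchy--Schwarz applied to the $y$-average.

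For the weak-$\star$ limit the decisive structural fact is that the $y$-average of $f^\epsilon$ equals $f^0$ \emph{exactly}, for every $\epsilon$: by $L_y$-periodicity, $\bar f^\epsilon(t,x)=L_y^{-1}\int_0^{L_y}f_{in}(x-at,\,y-\tfrac{b}{\epsilon}t)\,dy=\bar f_{in}(x-at)=f^0(t,x)$. Consequently $f^\epsilon-f^0$ has vanishing $y$-mean and is uniformly bounded, $\|f^\epsilon-f^0\|_{L^\infty(0,T;L^2_\sharp(\Omega))}\le 2\|f_{in}\|_{L^2_\sharp(\Omega)}$. I would then test against the dense family in $L^1(0,T;L^2_\sharp(\Omega))$ (the predual of $L^\infty(0,T;L^2_\sharp(\Omega))$, since $L^2$ is reflexive) consisting of finite combinations of $\phi(t,x,y)=\psi(t)\,\chi(x)\,e^{2\pi i l y/L_y}$ with $\psi\in L^1(0,T)$, $\chi\in L^2_\sharp(0,L_x)$, $l\in\ZZ$. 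For $l=0$ the pairing of $f^\epsilon-f^0$ with $\phi$ vanishes identically by the exact-average identity. For $l\neq 0$, shifting $y\mapsto y+\tfrac{b}{\epsilon}t$ (periodicity once more) factors the $y$-integration out of the $\epsilon$-dependence and reduces the pairing to
\[
\int_0^T \psi(t)\, e^{-2\pi i l b t/(\epsilon L_y)}\,G(t)\,dt,\qquad G(t):=\int_\Omega f_{in}(x-at,y)\,\overline{\chi(x)}\,e^{-2\pi i l y/L_y}\,dx\,dy,
\]
where $G\in L^\infty(0,T)$, so that $\psi G\in L^1(0,T)$.

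Since $lb/(\epsilon L_y)\to\infty$ as $\epsilon\to0$ for $l\neq0$, the Riemann--Lebesgue lemma forces this oscillatory integral to zero, so the pairing converges for every $\phi$ in the dense family. Combining this with the uniform bound on $f^\epsilon-f^0$ from the previous paragraph, the standard splitting argument for weak-$\star$ convergence (approximate a general test function in $L^1(0,T;L^2_\sharp(\Omega))$ by a dense element, control the remainder by the uniform bound, and send $\epsilon\to0$ on the finitely many surviving modes) extends the convergence to all test functions, which is precisely \eqref{Weak}. The only genuinely analytic ingredient is the Riemann--Lebesgue decay of the oscillatory-in-time phase; the main point to emphasize is that working with $f^\epsilon-f^0$ rather than $f^\epsilon$ alone is what makes the density argument close, and the uniform bound on the difference is available exactly because $\bar f^\epsilon=f^0$ identically. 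I anticipate no further obstacle beyond correctly ordering the limit with the summation over $y$-frequencies, a difficulty the density formulation avoids by retaining only finitely many modes at each stage.
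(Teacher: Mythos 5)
Your proof is correct, but it follows a genuinely different route from the paper's. The paper never Fourier-analyzes: it introduces the primitive $g(x,y)=\int_0^y\bigl(f_{in}(x,z)-\bar f_{in}(x)\bigr)\,dz$, checks that the zero-mean property makes $g$ $L_y$-periodic, writes the fluctuation $f^\epsilon-f^0$ as $(\partial_y g)$ evaluated along the characteristics, and then uses the identity $\partial_y = -\tfrac{\epsilon}{b}\bigl(\partial_t + a\,\partial_x\bigr)$ along the flow to integrate by parts in time against $\phi\in C^1_0(0,T;L^2_\sharp(\Omega))$; this yields the quantitative bound $\bigl|\langle f^\epsilon-f^0,\phi\rangle\bigr|\le C_\phi\,\epsilon$ before the final density step. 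You instead exploit the exact identity $\bar f^\epsilon=f^0$ (which the paper uses only implicitly, through the zero mean of $\partial_y g$), expand in $y$-Fourier modes, and kill each mode $l\neq 0$ by extracting the phase $e^{2\pi i l b t/(\epsilon L_y)}$ and invoking Riemann--Lebesgue. The trade-offs: your argument is more elementary in that it avoids constructing the primitive and verifying its periodicity, and the convergence part needs only $f_{in}\in L^2_\sharp(\Omega)$ rather than $H^1_\sharp$; it is also structurally the same mechanism that makes the Fourier scheme of Section~4 asymptotic-preserving, which is a nice conceptual link. The paper's integration-by-parts argument, on the other hand, delivers an explicit $O(\epsilon)$ rate for smooth-in-time test functions and does not rely on constant coefficients diagonalizing under Fourier transform, so it transfers more readily to variable-coefficient settings such as the second toy model. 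Both proofs close with the same splitting: uniform $L^\infty(0,T;L^2)$ bound on $f^\epsilon-f^0$ plus convergence on a dense family of test functions; you make this step explicit, the paper leaves it implicit in the phrase ``dense injection.''
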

\begin{proof}
To prove \eqref{Weak}, which signifies
$$
\int_0^T \int_\Omega \left(f^\epsilon(t,x,y) - f^0(t,x) \right)  \phi(t,x,y) \,dx\,dy\,dt \underset{\epsilon \to 0}{\longrightarrow}  0 \quad \forall \phi \in L^1(0,T;L^2_\sharp(\Omega))\,,
$$
we shall introduce first a primitive of the function $f_{in}(x,.) - \bar{f_{in}}(x)$, {\it i.e.} 
 \begin{align*}
g(x,y) := \int_0^y \Big(f_{in}(x,z) - \bar{f_{in}}(x) \Big)dz.
\end{align*}
It follows that the function $g$ belongs to $H^1_\sharp(\Omega)$ such that $g^\epsilon(t,x,y):=g(x-at,y-b\,t/\epsilon)$ belongs to $W^{1,\infty}(0,T; L^2_\sharp(\Omega)) \cap L^{\infty}(0,T; H^1_\sharp(\Omega))$. The $L_y$-periodicity of $g$ is seen by the simple computation
\begin{align*}
g(x,y+L_y) &= \int_0^{y+L_y} \Big(f_{in}(x,z) - \bar{f_{in}}(x) \Big)dz = \int_{-L_y}^y f_{in}(x,z)dz - \bar{f_{in}}(x)(y+L_y) \\
&= \int_0^y f_{in}(x,z)dz - \bar{f_{in}}(x)\, y + \int_{0}^{L_y}f_{in}(x,z)dz - \bar{f_{in}}(x)\, L_y = g(x,y)\,.
\end{align*}
 Taking now an arbitrary test function $\phi \in C^1_0(0,T;L^2_\sharp(\Omega))$ and introducing for simplicity for each $f,g \in L^2_\sharp(\Omega)$ the bracket $\langle f,g \rangle:= \int_\Omega f\, g \, dx\, dy$, we have
 \begin{align*}
&\int_0^T \langle f_{in} (x-at,y-\frac{b}{\epsilon}t) - \bar{f}_{in}(x-at), \phi(t)\rangle dt = \int_0^T \langle \big(\partial_y g\big)\big(x-at,y-\frac{b}{\epsilon}t\big), \phi(t) \rangle dt  \\
&= - \frac{\epsilon}{b} \Bigg[ \int_0^T \langle \partial_t \Big[g\big(x-at,y-\frac{b}{\epsilon}t\big)\Big]+a  \big(\partial_x g\big)\big(x-at,y-\frac{b}{\epsilon}t\big),  \phi(t)\rangle dt  \Bigg] \\
&=  \frac{\epsilon }{b}  \int_0^T \langle  g\big(x-at,y-\frac{b}{\epsilon}t\big),  \phi '(t)\rangle  dt - \frac{\epsilon a}{b}\int_0^T \langle   \big(\partial_x g \big)\big(x-at,y-\frac{b}{\epsilon}t\big), \phi(t) \rangle dt .
\end{align*}

\noindent As $g^\epsilon \in W^{1,\infty}(0,T; L^2_\sharp(\Omega)) \cap L^{\infty}(0,T; H^1_\sharp(\Omega))$, we can estimate
\begin{align*}
\forall \phi \in C^1_0(0,T;L^2_\sharp(\Omega)),  \; \;  & \left| \int_0^T \langle \Big[ f_{in} (x-at,y-\frac{b}{\epsilon}t) - \bar{f}_{in}(x-at) \Big],  \phi(t) \rangle dt \right| \leqslant C \epsilon\,,
\end{align*} 
\noindent where $C>0$ is a constant independent on $\epsilon$. Therefore,
\begin{align*}
\forall \phi \in C^1_0(0,T;L^2_\sharp(\Omega)),  \; \;  &\int_0^T \langle \Big[ f_{in} (x-at,y-\frac{b}{\epsilon}t) - \bar{f}_{in}(x-at) \Big], \phi(t) \rangle dt \underset{\epsilon \to 0}{\longrightarrow} 0\,,
\end{align*}
which concludes the proof due to the dense injection $C^1_0(0,T;L^2_\sharp(\Omega)) \subset L^1(0,T;L^2_\sharp(\Omega))$.
\end{proof}
\section{Numerical schemes for the anisotropic Vlasov equation} \label{SEC21}
In this section we shall now introduce several numerical schemes for the resolution of \eqref{AV} and examine them in more details. Firstly, different time semi-discretizations will be presented and then some words mentioned about a standard upwind space-discretization. The time-discretization is the most important step in the construction of  AP-schemes.\\
For this, let us first introduce the following homogeneous discretizations of our time interval $[0,T]$ as well as of our simulation domain $\Omega=[0,L_x] \times [0,L_y]$ :
\be
\begin{array}{lll}\label{discret}
\Delta t&:= T/N_t\,, \,\,\, N_t \in \NN\,; &\quad t_n:=n* \Delta t\,, \,\,\, n=0, \cdots, N_t\\[3mm]
\Delta x&:= L_x/(N_x-1)\,, \,\,\, N_x \in \NN\,;& \quad x_i:=(i-1)* \Delta x\,, \,\,\, i=1, \cdots, N_x \; \; \; \; \; \; \; \; \\[3mm] 
\Delta y&:= L_y/(N_y-1)\,, \,\,\, N_y \in \NN\,;& \quad y_j:=(j-1)* \Delta y\,, \,\,\, j=1, \cdots, N_y\,.
\end{array}
\ee
We denote by $Q_h$ the index domain  $Q_h := [0,N_t] \times [1,N_x] \times [1,N_y] \subset \mathbb{N}^3$. We shall denote further by $f^{\epsilon,n}$ resp. $f^{\epsilon,n}_{ij}$ the numerical approximation of $f^{\epsilon}(t^n,x,y)$ resp. $f^{\epsilon}(t^n,x_i,y_j)$. Recall also that we consider a doubly-periodic framework, such that
$$
f^{\epsilon,n}_{0,j}=f^{\epsilon,n}_{N_x-1,j}\,, \quad f^{\epsilon,n}_{1,j}=f^{\epsilon,n}_{N_x,j}\,, \quad f^{\epsilon,n}_{i,0}=f^{\epsilon,n}_{i,N_y-1}\,, \quad f^{\epsilon,n}_{i,1}=f^{\epsilon,n}_{i,N_y}\,, \quad \forall (n,i,j) \in Q_h\,.
$$
\subsection{Semi-discretization in time }
\subsubsection{ IMEX scheme}\label{211}
The first time semi-discretization we shall study will be the implicit-explicit (IMEX) Euler method, where the stiff term is taken implicitly, {\it i.e}
\be \label{EI}
(IMEX)_\epsilon \quad\,\, {f^{\epsilon,n+1}-f^{\epsilon,n} \over \Delta t} + a\, \partial_x f^{\epsilon,n} + {b \over \epsilon}\, \partial_y f^{\epsilon,n+1}=0\,, \quad \forall n \ge 0\,.
\ee
To study the behavior of this scheme, as $\epsilon$ becomes smaller, let us formally let $\epsilon$ go to zero in \eqref{EI} and get
\begin{align*}
\partial_y f^{0,n+1}(x,y) = 0\,, \quad \forall (x,y)\in \Omega\,.
\end{align*}
This equation admits an infinite amount of solutions, namely all periodic functions dependent only on $x$. This formal analysis permits hence to conclude that the IMEX scheme can not be an AP-scheme, as it does not capture correctly the asymptotic behavior of the problem, which is rather given by the limit problem $(V)_0$. This property shall be tested numerically in Section \ref{SEC24}.

\subsubsection{Fourier method/Micro-Macro method}\label{212}
A different way to solve \eqref{AV} is to use a partial Fourier transform in the variable $y$, which is possible here, as we are in a simplified periodic context with constant coefficients. Denoting indeed the Fourier coefficients by
$$
\hat{f^\epsilon_k} (t,x):= { 1 \over L_y} \int_0^{L_y} f^\epsilon(t,x,y)\, e^{-  {\mathbf i} \, \omega_y\, k\, y}\, dy\,, \quad \forall k \in \ZZ\,, \qquad \omega_y:={2 \, \pi \over L_y}\,,
$$
one has
\be \label{FTR}
f^\epsilon(t,x,y)= \sum_{k=-\infty}^\infty \hat{f^\epsilon_k} (t,x)\, e^{ {\mathbf i}\, \omega_y\,  k \,y}\,,
\ee
where the Fourier coefficients are solutions of the system
\be \label{Fourier}
\left\{
\begin{array}{ll}
\ds \partial_t \widehat{f^\epsilon_0} + a \; \partial_x \widehat{f^\epsilon_0} =0\,, &\quad \forall (t,x) \in [0,T] \times [0,L_x]\\[3mm]
\ds \partial_t \widehat{f^\epsilon_k} + a \; \partial_x \widehat{f^\epsilon_k} + {\mathbf i} \; \omega_y k \; {b \over \epsilon}  \widehat{f^\epsilon_k}= 0 \,, &\quad \forall k \neq 0\,, \,\,\, \forall (t,x) \in [0,T] \times [0,L_x]\,.
\end{array}
\right.
\ee
A simple discretization of this problem can be
$$
(F)_\epsilon \quad\,\, \left\{
\begin{array}{l}
\ds {\widehat{f^{\epsilon,n+1}_0}-\widehat{f^{\epsilon,n}_0} \over \Delta t} + a \; \partial_x \widehat{f^{\epsilon,n}_0} =0\,, \quad \forall n \ge 0\\[3mm]
\ds {\widehat{f^{\epsilon,n+1}_k}-\widehat{f^{\epsilon,n}_k} \over \Delta t} + a \; \partial_x \widehat{f^{\epsilon,n}_k} + {\mathbf i} \; \omega_y k {b \over \epsilon}  \; \widehat{f^{\epsilon,n+1}_k}= 0 \,, \quad \forall k \neq 0\,, \,\, \forall n \ge 0\,.
\end{array}
\right.
$$
Solving this system and using the inverse Fourier transform \eqref{FTR} permits to get the desired result, {\it i.e.} the values of the unknowns $f^{\epsilon,n}_{ij}$, solution of \eqref{AV}.\\ 

\noindent Let us investigate now  the behavior of this system when $\epsilon \rightarrow 0$. Formally we get
$$
(F)_0\quad\,\, \left\{
\begin{array}{l}
\ds {\widehat{f^{\epsilon,n+1}_0}-\widehat{f^{\epsilon,n}_0} \over \Delta t} + a \; \partial_x \widehat{f^{\epsilon,n}_0} =0\,, \quad \forall n \ge 0\\[3mm]
  \widehat{f^{\epsilon,n+1}_k}= 0 \,, \quad \forall k \neq 0\,, \,\, \forall n \ge 0\,.
\end{array}
\right.
$$

\noindent Therefore, we find a discretized version of the Vlasov limit problem $(V)_0$,  signifying that this method will be ``Asymptotic-Preserving''. \\

\noindent The Fourier method is very nice, however it can be applied only in a simplified periodic framework with constant coefficients. As a sort of generalization one can think at the micro-macro method \cite{NL}, which is based on the decomposition of each quantity in its mean part over the variable $y$, denoted by $H^\epsilon$ or simply $\bar{f^\epsilon}$, and the fluctuation part $h^\epsilon$ or simply $({f^\epsilon})'$, defined as follows
$$
H^\epsilon(t,x):= { 1 \over L_y} \int_0^{L_y} f^\epsilon(t,x,y)\, dy\,, \quad h^\epsilon(t,x,y):= f^\epsilon(t,x,y)- H^\epsilon(t,x)\,, \quad \bar{h^\epsilon}=0\,.
$$
Taking now the average of the advection equation \eqref{AV} over $y$ and subtracting the resulting equation then from the initial one, yields a system to be solved for the unknowns $(H^\epsilon,h^\epsilon)$, {\it i.e.}
\be \label{MM}
(MM)_\epsilon \quad \left\{
\begin{array}{ll}
\ds \partial_t H^\epsilon + a \partial_x H^\epsilon=0\,, & \quad \forall (t,x) \in [0,T] \times [0,L_x]\\[3mm] 
\ds \partial_t h^\epsilon + a \partial_x h^\epsilon + {b \over \epsilon} \partial_y h^\epsilon =0\,, & \quad \forall (t,x,y) \in [0,T] \times \Omega\\[3mm] 
\ds \bar{ h^\epsilon}=0\,, & \quad \forall (t,x) \in [0,T] \times [0,L_x]\,.
\end{array}
\right.
\ee

\noindent Let us study now  the behavior of this system when $\epsilon \rightarrow 0$. We have  formally 
\be \label{MM}
(MM)_0 \quad \left\{
\begin{array}{ll}
\ds \partial_t H^0 + a \partial_x H^0=0\,, & \quad \forall (t,x) \in [0,T] \times [0,L_x]\\[3mm] 
\partial_y h^0 =0\,, & \quad \forall (t,x,y) \in [0,T] \times \Omega\\[3mm] 
\ds \bar{ h^0}=0\,, & \quad \forall (t,x) \in [0,T] \times [0,L_x]\,.
\end{array}
\right.
\ee
\noindent The two last equations establish that $h^0 \equiv 0$. Hence the system $(MM)_0$ is nothing else than the Vlasov limit system $(V)_0$.  Again, we have created a scheme which is a regular perturbation of the asymptotic limit model, and shall be hence ``Asymptotic-Preserving''. 

This method is rather similar to Fourier method, however more general, as it can be applied in rather broad contexts. To understand this similitude, remark that $H^\epsilon$ is nothing else than the first Fourier coefficient $\widehat{f^\epsilon_0}$ and the fluctuation $h^\epsilon$ regroups the remaining Fourier modes. However, there is still a disadvantage or difficulty, namely the implementation of the constraint $\bar{ h^\epsilon}=0$, which is crucial for the passage to the limit $\epsilon \rightarrow 0$. It is this constraint which permits in the limit to get a unique $h^0$ and to have thus a well-posed limit problem $(MM)_0$. But averaging  along the anisotropy lines can be very difficult in more general contexts, for ex. when these lines are not aligned with the axes.
\subsubsection{Lagrange-multiplier method}\label{214}
The Lagrange-multiplier method is based on the idea to replace the stiff, dominant term ${b \over \epsilon} \partial_y f$ by a smoother one $\partial_y q$, yielding the system
\be \label{La}
(La)_\epsilon \quad \left\{
\begin{array}{l}
\ds \partial_t f^\epsilon + a \; \partial_x f^\epsilon + b \; \partial_y q^\epsilon=0\,, \quad \forall (t,x,y) \in   [0,T] \times \Omega \\[3mm]
\ds  \partial_y f^\epsilon = \epsilon \; \partial_y q^\epsilon\,, \quad \forall (t,x,y)\in   [0,T] \times \Omega \\[3mm]
\ds q^\epsilon_{| \Gamma_{in}}=0\,,
\end{array}
\right.
\ee
where the inflow boundary is defined as $\Gamma_{in}:=\{ (x,y) \in \partial \Omega\,\, / \,\, y=0 \}$. In the limit $\epsilon \rightarrow 0$ one remarks that $q^\epsilon$ is a sort of Lagrange multiplier corresponding to the constraint $\partial_y f^0=0$, where the name of the method.

\bigskip

\noindent First, we will prove the equivalence between the system $(La)_{\epsilon}$ and the Vlasov equation $(V)_{\epsilon}$, proving thus the well-posedness of the reformulation $(La)_{\epsilon}$. For this, let us first consider the unique solution $f^{\epsilon}$ of $(V_{\epsilon})$ and prove the existence of a function $q^{\epsilon}$ such that $(f^{\epsilon},q^{\epsilon})$ solves $(La)_{\epsilon}$. Since $f_{in} \in H_{\sharp}^1(\Omega)$, we have $f^{\epsilon} \in \mathcal{V} := W^{1,\infty}(0,T; L^2_\sharp(\Omega)) \cap L^{\infty}(0,T; H^1_\sharp(\Omega))$. The kernel of the dominant operator ${b \over \epsilon} \partial_y f$, denoted by $\mathcal{G}$, reads:
\begin{align*}
\mathcal{G} := \{ f^{\epsilon} \in \mathcal{V}, \ \partial_y f^{\epsilon} = 0 \}.
\end{align*}
\noindent Then we shall decompose $f^{\epsilon}$ in the following manner, which is somehow similar to a Hilbert Ansatz : 
\begin{align}
f^{\epsilon} = p^{\epsilon} + \epsilon q^{\epsilon},
\label{decompo}
\end{align}
\noindent with $(p^{\epsilon},q^{\epsilon}) \in \mathcal{G} \times \mathcal{V}$. To have a unique decomposition, we have to single out the $\mathcal{G}$-part of $q^\epsilon$, by fixing for example  $q^\epsilon$ on the inflow boundary $\Gamma_{in}$, choosing $q^\epsilon \in {\mathcal Q}$ with
\begin{align*}
\mathcal{Q} := \{ q^{\epsilon} \in \mathcal{V}, \ q^{\epsilon}_{|_{\Gamma_{in}}}=0 \}.
\end{align*}
Obviously, we have $\mathcal{G} \cap \mathcal{Q} = \{0_{\mathcal{V}}\}$, implying the uniqueness of the decomposition \eqref{decompo}. Replacing now this decomposition in the system $(V)_{\epsilon}$, we obtain directly the system $(La)_{\epsilon}$, which proves the existence of a solution to $(La)_{\epsilon}$. The converse is trivial, meaning that for $(f^{\epsilon},q^{\epsilon}) \in \mathcal{V} \times \mathcal{Q}$ solution to $(La)_{\epsilon}$, $f^{\epsilon}$ solves $(V)_{\epsilon}$. Altogether, we have proved the equivalence between both systems. 

\bigskip

\noindent Now let us consider the limit problem of $(La)_{\epsilon}$, obtained by letting formally $\epsilon \to 0$ in \eqref{La}
\be \label{La2}
(La)_0 \quad \left\{
\begin{array}{l}
\ds \partial_t f^0 + a \; \partial_x f^0 + b \; \partial_y q^0=0\,, \quad \forall (t,x,y)\in  [0,T] \times \Omega\\[3mm]
\ds  \partial_y f^0 = 0, \quad \forall (t,x,y)\in  [0,T] \times \Omega\\ [3mm]
\ds q^0_{| \Gamma_{in}}=0\,.
\end{array}
\right.
\ee

\noindent The second equation leads to $f^0 = \bar{f^0}$. Then, averaging the first equation of \eqref{La2} in the $y$-variable, yields
\begin{align}
\partial_t \bar{f^0} + a \; \partial_x \bar{f^0} = 0,
\end{align}
\noindent where we used that $q^0$ is $L_y$-periodic. This equation permits the determination of the limit function $f^0$. Furthermore, the remaining well-posed system 
\be \label{La3}
\quad \left\{
\begin{array}{l}
\ds  b \partial_y q^0= -\partial_t f^0 - a \; \partial_x f^0 \,, \quad \forall (t,x,y) \in [0,T] \times \Omega \\[3mm]
\ds  q^0_{| \Gamma_{in}} = 0 \,, \quad \forall (t,x) \in \times [0,T] \times [0,L_x],
\end{array}
\right.
\ee
can be solved to assure finally the existence of the unique solution $(f^0,q^0)$ for the limit problem $(La)_0$.\\
The Lagrangian scheme seems to be the most ``far-reaching'' AP-scheme . The only disadvantage of this method is that we have now two unknowns and hence two equations to be solved, meaning longer simulation times. However, we are no more forced to follow the anisotropy lines and can choose coarse Cartesian, not-field aligned grids.

\subsection{Space discretization for the IMEX scheme }
For any numerical scheme presented above, we decided to consider the standard upwind method to discretize the transport terms in the equation \eqref{AV}. The idea behind this choice is that the space-discretization is not the important step in the construction of an AP-scheme, such that we opted for a simple discretization, in order not to embroil the further numerical analysis as well as the understanding of the main ideas of our methods. The same arguments incited us to select only first order discretizations in time. A Runge-Kutta coupled to a second-order space-discretization would be naturally more accurate, changes however nothing in the essential concept of our AP-strategies. As mentioned earlier, in a forthcoming paper we shall be concerned with a realistic, fusion plasma situation, such that we shall adapt the most adequate of the here presented schemes to more accurate second order techniques, to gain in accuracy.\\
Now, let us recall the first-order upwind forms

$$
a \; \partial_x f^{\epsilon,n}_{i,j} \approx a\, \frac{f^{\epsilon,n}_{i,j}-f^{\epsilon,n}_{i-1,j}}{\Delta x},  \; \text{if} \quad a>0, \quad a\, \partial_x f^{\epsilon,n}_{i,j} \approx a \; \frac{f^{\epsilon,n}_{i+1,j}-f^{\epsilon,n}_{i,j}}{\Delta x}, \;  \text{if} \; \; a<0, \quad \forall (n,i,j) \in Q_h.
$$
We have analogous formulae for the partial derivative in the $y$-variable. 
Denoting now  $\ds \alpha := \frac{a \Delta t}{\Delta x} > 0$ and $ \ds \beta := \frac{b \Delta t }{ \Delta y} >0$ and using the periodicity, {\it i.e.}
$$
f^{\epsilon,n}_{0,j}=f^{\epsilon,n}_{N_x-1,j}\,, \quad f^{\epsilon,n}_{1,j}=f^{\epsilon,n}_{N_x,j}\,, \quad f^{\epsilon,n}_{i,0}=f^{\epsilon,n}_{i,N_y-1}\,, \quad f^{\epsilon,n}_{i,1}=f^{\epsilon,n}_{i,N_y}\,, \quad \forall (n,i,j) \in Q_h\,.
$$
the completely discretized IMEX scheme writes finally : 
\begin{align*}
(IMEX)_{\epsilon} \quad \quad
(\epsilon + \beta) f_{i,j}^{\epsilon,n+1} - \beta f_{i,j-1}^{\epsilon,n+1} = \epsilon(1- \alpha) f_{i,j}^{\epsilon,n} + \epsilon \alpha f_{i-1,j}^{\epsilon,n}, 
\end{align*}
\noindent for all $(n,i,j) \in [0, N_t-1] \times [1,N_x-1] \times [1,N_y-1]$. We remark that we can rewrite this scheme like a system of $N_x-1$ equations : 
\be \label{SYST_IM}
\mathcal{A } \;  \mathcal{F}_i^{n+1} = \mathcal{B}_i^n, \; \; \forall n \geqslant 0, \quad \forall i \in [1,N_x-1],
\ee
\noindent where : 
\begin{align*}
 \mathcal{A} = \left(\begin{array}{ccccc}(\epsilon+\beta) & 0 & \hdots & 0 & - \beta \\- \beta & \ddots & 0 & 0 & 0 \\0 & \ddots  & \ddots & 0 & 0 \\0 & 0 & \ddots & \ddots & 0 \\0 & 0 & 0 & -\beta & (\epsilon + \beta)\end{array}\right), \; \;\quad \mathcal{F}^{n+1}_i = \left(\begin{array}{c}f^{\epsilon,n+1}_{i,1} \\f^{\epsilon,n+1}_{i,2} \\\vdots \\f^{\epsilon,n+1}_{i,N_y-2} \\ f^{\epsilon,n+1}_{i,N_y-1}\end{array}\right), 
 \end{align*}
 
 \begin{align*}
\mathcal{B}_i^n =  \left(\begin{array}{c}\epsilon(1- \alpha)f_{i,1}^{\epsilon,n} + \epsilon \alpha f^{\epsilon,n}_{i-1,1} \\\epsilon(1- \alpha)f_{i,2}^{\epsilon,n} + \epsilon \alpha f^{\epsilon,n}_{i-1,2} \\\vdots \\ \epsilon(1- \alpha)f_{i,N_y-2}^{\epsilon,n} + \epsilon \alpha f^{\epsilon,n}_{i-1,N_y-2} \\ \epsilon(1-\alpha)f^{\epsilon,n}_{i,N_y-1}+ \epsilon  \alpha f^{\epsilon,n}_{i-1,N_y-1}\end{array}\right)\,.
 \end{align*}
At each time step, we resolve this system $ \forall i \in [1,N_x-1]$, to get the unknowns $f^{\epsilon,n+1}_{i,j}$. Remark that $\mathcal{A}=\epsilon \, Id + \mathcal{C}_\beta$ is a regular perturbation of a singular, cyclic matrix $\mathcal{C}_\beta$.
\section{Numerical simulations} \label{SEC24}

In this part, we shall test numerically every scheme introduced in the previous Section for the resolution of the anisotropic Vlasov equation \eqref{AV}. The homogeneous time and phase-space discretization was previously introduced in \eqref{discret}  and we choose in the sequel the following parameters: $T=1$, $L_x = 2\pi$, $L_y= 2\pi$, $N_t = 101$, $N_x = N_y = 201$, $a=0.1$ and $b=1$. Changes in these parameters shall be explicitly mentioned. The initial condition we adopt is given by :
\begin{align*}
 f_{in}(x,y) := \sin( x) \big(\cos(2 y)+1 \big), \quad \forall (x,y) \in \Omega:=[0,L_x] \times [0,L_y].
 \end{align*}
\noindent We recall that the exact solution of \eqref{AV} is known and reads, for each $\epsilon >0$:
\begin{align*}
f^{\epsilon}_{ex}(t, x, y) =   \sin \big(x-a t\big) \Bigg[\cos \Big(2 \Big(y- \frac{b}{\epsilon}t\Big)\Big)+1 \Bigg], \quad \forall (t,x,y) \in [0,T] \times \Omega.
\end{align*}
In Figure \ref{figure1}, we reveal two graphics which contain on the one hand $f_{in}$ and on the other hand $f^{\epsilon}_{ex}$ at the final time $T=1$.

\begin{figure}[ht]
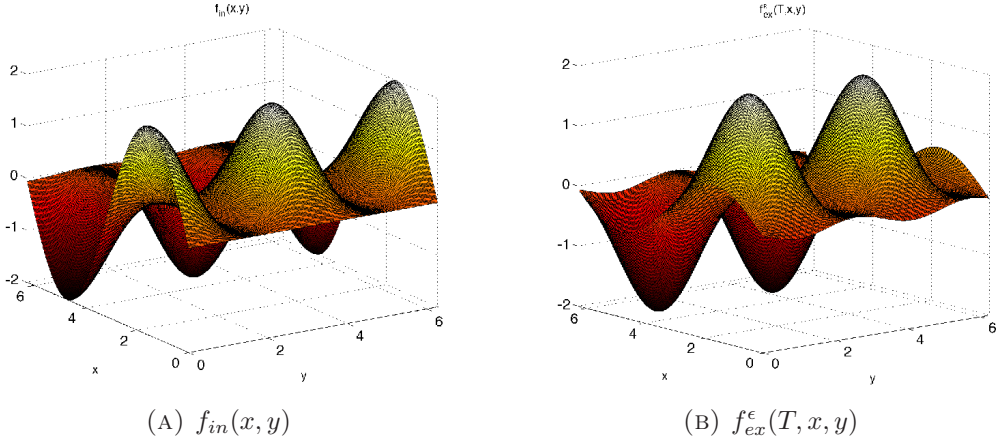

\centering
	\subfloat[$f_{in}(x,y)$]{
      \includegraphics[width=0.45\textwidth]{f0}}
      \label{f0}    
	\subfloat[$f^{\epsilon}_{ex}(T,x,y)$]{
      \includegraphics[width=0.45\textwidth]{f_ana}}
      \label{f_ana} 
\caption{\small{Representation of the initial condition $f_{in}$ (A) and the exact solution $f_{ex}^{\epsilon}$ at the final time $T=1$ (B). Here $\epsilon =1$.}}
\label{figure1}
\end{figure}

\noindent Furthermore, in order to better figure out our problem, we plotted in Figure \ref{figure2} the exact solution of the limiting Vlasov system \eqref{eq4} at the final time $T$, i.e. $\ds f^{0}_{ex}(T,x) = \bar{f_{in}}(x-aT)$. Remark that this solution is homogeneous in the $y$-variable. 

\bigskip

\noindent Finally, we show in Figure \ref{figure_fluctu} the time-evolution of the exact solution $f^{\epsilon}_{ex}$ at one point only, {\it i.e.} $(x_{N_x-1}, y_{N_y-1})$. We distinguish easily on the left plot (A) of Fig. \ref{figure_fluctu} the two periods, one linked with the $x$-variable, and the other one corresponding to the $y$-variable. This last one is $\epsilon$-dependent and we see that more $\epsilon$ is small, more the frequency of the time-oscillations becomes important. As the 2D situation is not so eloquent, we eliminate the $x$-variable in the problem and considered also a 1D problem, keeping only the term containing the parameter $\epsilon$ ({\it i.e.} $a=0$). The time-evolution of the exact solution at the point $y_{N_y-1}$ is now plotted in Fig. \ref{figure_fluctu} (B). One observes here more easily that with smaller becoming $\epsilon$, the frequency of the time-oscillations is increasing. In the limit $\epsilon \to 0$, $f^{\epsilon}(t,y_{N_y-1})$ converges weakly towards the average, which is here the constant $1$.

\begin{figure}[ht]
\centering
	  {
      \includegraphics[width=0.45\textwidth]{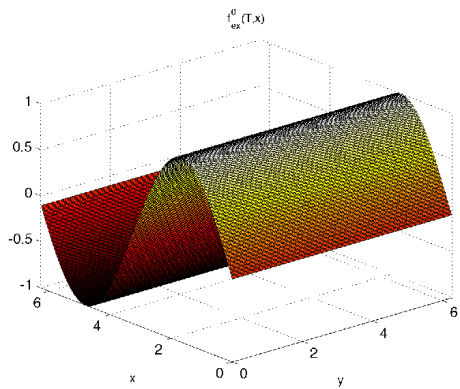}}
      \label{f0}    
\caption{\small{Representation of the exact limit solution $f^0_{ex}(t,x)$ at the final time $T$}. }
\label{figure2}
\end{figure}

\begin{figure}[ht]
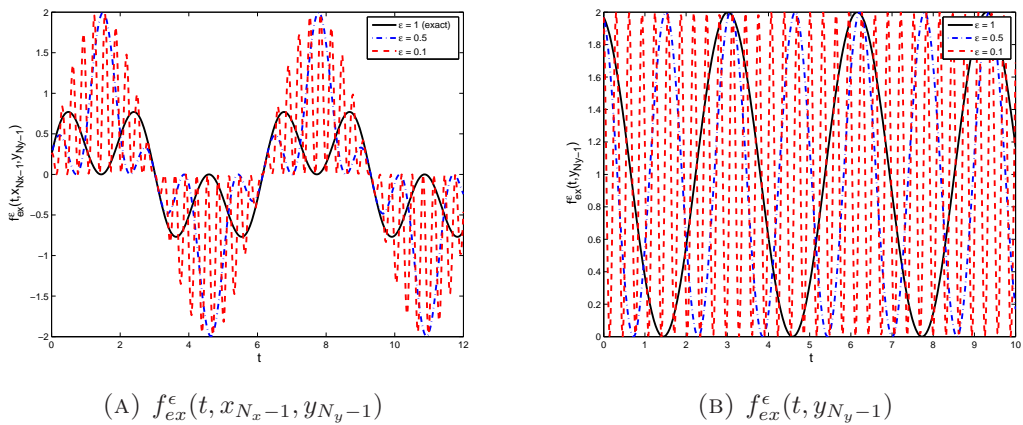

\centering
	\subfloat[$f^{\epsilon}_{ex}(t,x_{N_x-1},y_{N_y-1})$]{
      \includegraphics[width=0.45\textwidth]{fluctu_ana}}
      \label{fluctu_ana}    
	\subfloat[$f^{\epsilon}_{ex}(t,y_{N_y-1})$]{
      \includegraphics[width=0.45\textwidth]{fluctu_1D}}
      \label{f_ana} 
\caption{\small{Time-evolution of the exact solution at point $(x_{N_x-1},y_{N_y-1})$ in the two dimensional case (A) with  $T=12$ and $N_t = 501$ ; resp. at point $y_{N_y-1}$ in the one dimensional case with $T=10$, $a=0$ and $N_t = 501$ (B). }}
\label{figure_fluctu}
\end{figure}
\subsection{Some results obtained with our schemes }\label{241}
Now we examine how the different numerical schemes introduced above cope with such an asymptotic behavior.

\subsubsection{ IMEX scheme} We start by first showing in Fig. \ref{figure3} as well as in the left plot of  Fig. \ref{figure4} the numerical solution $f^{\epsilon}$ via the IMEX-scheme, for three different values of $\epsilon$, namely $\epsilon=1$, $\epsilon=0.1$ and $\epsilon=10^{-10}$, all of them at the final time $T=1$. 

\begin{figure}[ht]
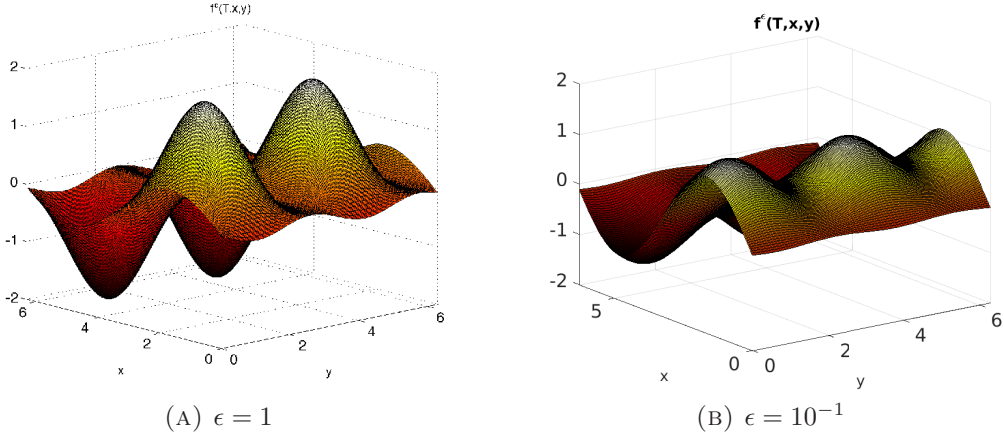

\centering
	\subfloat[$\epsilon=1$]{
      \includegraphics[width=0.45\textwidth]{euler}}
      \label{f0}    
	\subfloat[$\epsilon=10^{-1}$]{
      \includegraphics[width=0.45\textwidth]{euler_eps0p1}}
      \label{f_eps} 
\caption{\small{Representation of the numerical solution $f^{\epsilon}$ for two values of $\epsilon$, and at the final time $T$, corresponding to the IMEX scheme. }}
\label{figure3}
\end{figure}

\bigskip

\noindent For $\epsilon = 1$, we recognize an approximation of the exact solution (see Figure \ref{figure1}) and for $\epsilon=10^{-10}$, the limit solution is clearly obtained (see Figure \ref{figure2}). Briefly one can say that the numerical solution follows the weak-$\star$ convergence $f^\epsilon \converge{\star} f^0$ as $\epsilon$ becomes smaller and smaller. But, one can remark  a numerical diffusion which leads to a loss of amplitude, especially visible in the non-limit case $\epsilon=1$ or $\epsilon=10^{-1}$. To observe better this numerical diffusion, we show in the right plot of Fig. \ref{figure4} the time-evolution of just one point of the numerical solution, corresponding again to a 1D situation as the one plotted on the right of Fig. \ref{figure_fluctu}, and this for several values of $\epsilon$. 
\begin{figure}[ht]
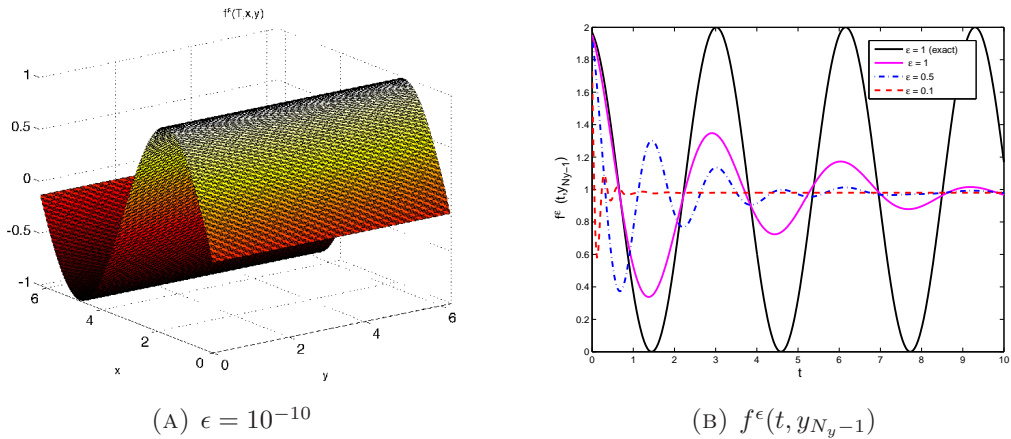

\centering
	\subfloat[$\epsilon=10^{-10}$]{
        \includegraphics[width=0.45\textwidth]{euler_limit}}
      \label{f0}    
	\subfloat[$f^{\epsilon}(t,y_{N_y-1})$]{
    \includegraphics[width=0.45\textwidth]{euler_1D_fluctu_bis}}
      \label{f_ana} 
\caption{\small{Left (A): Plot of the num. sol. $f^{\epsilon}$ for $\epsilon=10^{-10}$, at the final time $T$. Right (B): Time-evolution of the IMEX scheme sol. at point $y_{N_y-1}$ in the 1D case for $T=10$ and several $\epsilon$. We have added the exact solution for $\epsilon=1$.}}
\label{figure4}
\end{figure}
As one can observe, the damping is more and more pronounced if $\epsilon \to 0$. For small $\epsilon$-values the numerical solution recovers quasi immediately the weak limit solution, here the constant $1$. This damping phenomenon will be understood from the numerical analysis we shall fulfill in Section \ref{SEC25}.

\bigskip 
\subsubsection{Fourier, Micro-Macro and Lagrange-multiplier schemes} Let us now present analogous results for the remaining schemes, namely the Fourier, Micro-Macro and Lagrange-multiplier schemes. The 2D plots are rather similar to the ones presented for the IMEX-scheme (see Fig. \ref{figure3}-\ref{figure4}). To examine the difference between these methods, we preferred to plot in Fig. \ref{figure_fluct} only the time-evolution of the numerical solution in the 1D-context again.
\begin{figure}[ht]
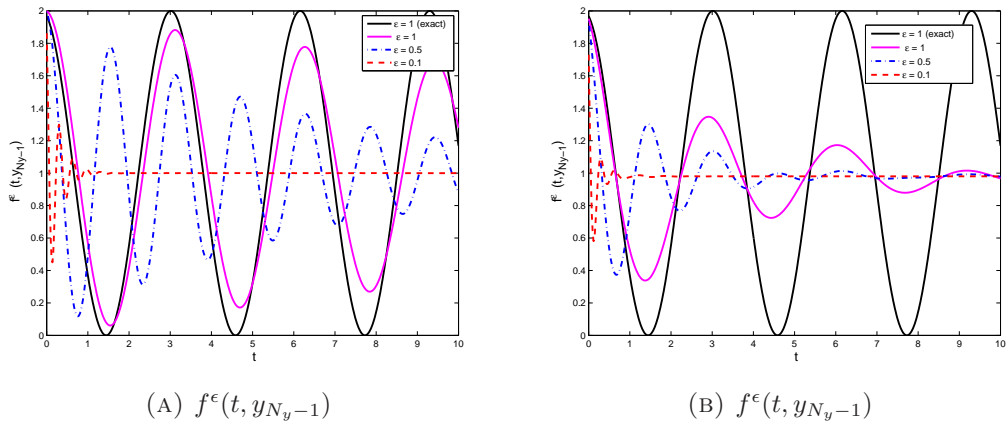

\centering
	\subfloat[$f^{\epsilon}(t,y_{N_y-1})$]{
    \includegraphics[width=0.45\textwidth]{fourier_1D_fluctu_bis}}
	\subfloat[$f^{\epsilon}(t,y_{N_y-1})$]{
    \includegraphics[width=0.45\textwidth]{MM_1D_fluctu_bis}}
\caption{\small{Time-evolution of the solution via Fourier (A) and IMEX, MM- resp. Lagrange-multiplier schemes (B), at $y_{N_y-1}$ in 1D with $T=10$, $a=0$, $N_t = 501$. We have added in both cases the exact solution for $\epsilon=1$.} }
\label{figure_fluct}
\end{figure}
We remark that the damping of the Fourier method is more slowly than the ones  of the IMEX-scheme as well as Micro-Macro and Lagrange-multiplier scheme (which are completely overlapping). But, once again we observe that in the limit $\epsilon / t\to 0$, the fluctuations are completely damped out and we recover the weak limit solution. 
\subsection{Convergence of the schemes for fixed $\epsilon >0$}
Let us study now the convergence of the here presented schemes with respect to time and space, and this for fixed $\epsilon>0$, permitting to show their validity in the large $\epsilon$-regime. For this, fix $\epsilon >0$ and consider the error between exact and numerical solutions as a function of the mesh-size, at the final time T. Firstly, concerning the convergence with respect to $\Delta t$, we choose small space steps ($N_x=N_y = 501$) such that the space errors are much smaller than the time error and vary then the time step. Equally we apply the same strategy for the convergence with  respect to $\Delta x$ and $\Delta y$, by fixing a time step of $N_t=501$. In all cases, the parameter $\epsilon$ is fixed to $1$. In Figure \ref{tronca}, we have plotted curves in  log-log scale, showing the evolution of the errors as a function of $\Delta x$, $\Delta y$ and $\Delta t$, respectively.

\begin{figure}[ht]
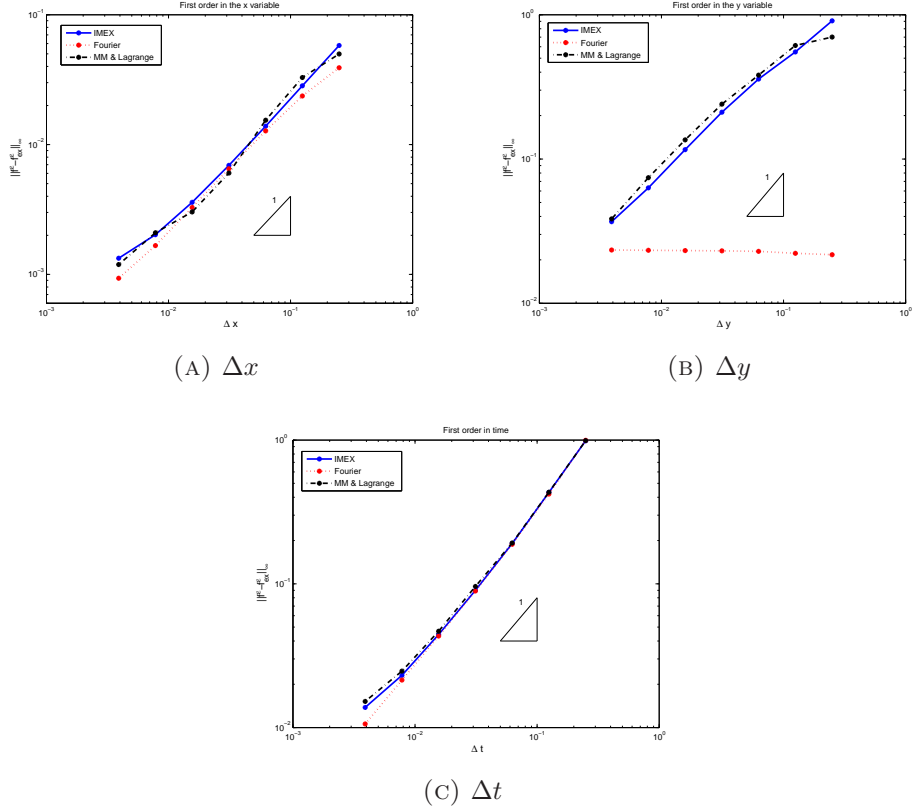

\centering
	\subfloat[$\Delta x$]{
      \includegraphics[width=0.4\textwidth]{delta_x}}
      \label{temps}    
	\subfloat[$\Delta y$]{
      \includegraphics[width=0.4\textwidth]{delta_y}}
      \label{tronca_x} 
      \vfill
    \subfloat[$\Delta t$]{
      \includegraphics[width=0.4\textwidth]{delta_t}}
      \label{sub:renonc}
\caption{\small{Evolution of the $L^{\infty}$-error between
 $f^{\epsilon}_{ex}(t,\cdot)$ and $f^{\epsilon}(t,\cdot)$ at final time $T=1$ and for $\epsilon =1$, as a function of $\Delta x$
 (with $N_y = 15 001$, $N_t = 15 001$), $\Delta y$ (with $N_x = 15 001$, $N_t =
 15 001$) and $\Delta t$ (with $N_x=N_y=1 001$).}} 
\label{tronca}
\end{figure}

\bigskip 

\noindent As expected, we observe that all schemes are first order in time and space. Some comments are however necessary to understand Figure \ref{tronca}. First, the slop of the curves gets smaller than $1$ in the small-grid ranges. This is due to the fact that the error to be investigated (for ex. in $\Delta t$) becomes as small as the fixed error term (in $\Delta x,\, \Delta y$) and saturates. Secondly, the slope of the curves becomes also smaller in the large-grid ranges. This is usual, as for large discretization steps, the rest-terms in the Taylor series for the error analysis can no longer be neglected. Finally, we would like to draw the attention of the reader to the Fourier error curve, which has a constant slope in (B). This is completely natural, as the Fourier method has spectral accuracy.

\subsection{Asymptotic behavior as $\epsilon  \rightarrow 0$}
To begin the study of the asymptotic behavior, we define the following two errors
$$
\eta_\epsilon(t) = \max_{i,j}|f^{\epsilon}_{ex,i,j} - f^{\epsilon}_{num,i,j}|(t), \qquad
\gamma_\epsilon(t) = \max_{i,j}|f^{\epsilon}_{num,i,j} - f^{0}_{ex,i,j}|(t),
$$
\noindent where $\eta_\epsilon(t)$ represents the $L^{\infty}$- error between the exact and the numerical solution at instant $t$, for fixed $\epsilon > 0$, whereas $\gamma_\epsilon(t)$ denotes the $L^{\infty}$- error at instant $t$ between the numerical solution $f^{\epsilon}_{num}$ and the exact limit solution $f^{0}_{ex}$.

\bigskip 

\noindent We are interested in the evolution of these two errors at the final time $T$ as functions of $\epsilon$. The curves corresponding to  the different schemes are plotted in Figure \ref{figure11}.
\noindent As expected, we observe a decrease of $\eta_\epsilon(T)$ and an increase of $\gamma_\epsilon(T)$  when $\epsilon \to 1$. For $\epsilon \to 0$ the converse behavior is observed. This plot shows that each scheme approximates well either the exact solution $f_{ex}^{\epsilon}$ for large $\epsilon$, or the exact limit  solution $f^{0}_{ex}$ for small $\epsilon$.

\begin{figure}[h]
\begin{center}
\includegraphics[scale = 0.5]{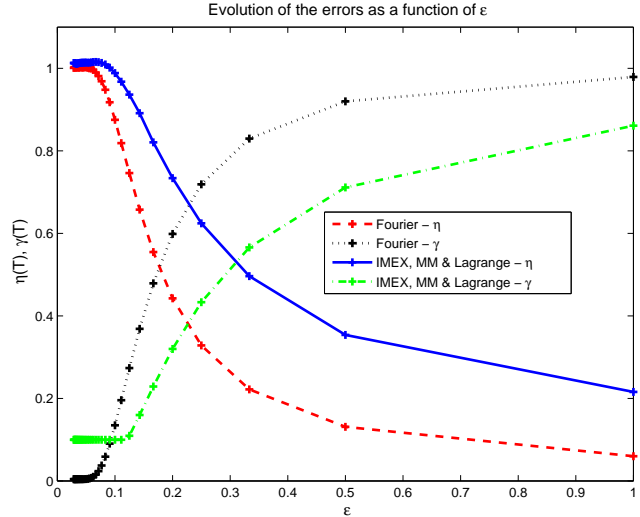} 
\end{center}
\caption{\small{Evolution of $\eta_\epsilon(T)$ and $\gamma_\epsilon(T)$ as a function of $\epsilon$ for each scheme.}}
\label{figure11}
\end{figure}

\bigskip

What can be said as a conclusion, is that all schemes seem to have the right asymptotic behavior  in this simple test case. Indeed, for fixed $\epsilon >0$, each numerical solution $f^{\epsilon}_{num}$ converges to the expected solution $f^{\epsilon}_{ex}$ as long as the grid is refined (Fig. \ref{tronca}). For fixed discretization steps, all numerical solutions $f^{\epsilon}_{num}$ converge towards the limit solution $f^{0}$ when $\epsilon$ becomes smaller and smaller, underlying the AP property of our methods.\\

It is worth mentioning however that the IMEX-scheme is no more working for $\epsilon$ smaller than $10^{-14}$, the matrix $\mathcal{A}$ of the IMEX linear-system \eqref{SYST_IM}, namely
$$
\mathcal{A } \;  \mathcal{F}_i^{n+1} = \mathcal{B}_i^n\,, \quad \quad 
\mathcal{A}=\epsilon \, Id + {\mathcal C}_\beta\,, \qquad \det \, {\mathcal C}_\beta=0\,,
$$
is becoming numerically singular in the limit $\epsilon \rightarrow 0$. This is not the case for the Micro-Macro as well as Lagrange-multiplier schemes, which give accurate results even for $\epsilon =0$.
This difference in the behavior can be observed also from the study of 
the condition-number of the discretization matrices, paying 
attention especially on the $\epsilon$-dependence. Remark here that an 
``Asymptotic-Preserving scheme'' must have an $\epsilon$-independent 
condition number, depending merely on the discretization parameters 
$\Delta x$, $\Delta y$.\\


In Fig. \ref{cond} we plotted thus the matrix condition-number $cond(\mathcal{A 
}):=||\mathcal{A }^{-1}||_2\, ||\mathcal{A }||_2$ corresponding to the three 
schemes (IMEX, Micro-Macro and Lagrange-multiplier) as a function of $\epsilon$. What 
can be observed is that for the Micro-Macro and Lagrange-multiplier scheme, the 
condition-number is $\epsilon$-independent (for $\epsilon \le 10^{-2}$), which is a hint of the well-posedness of these two problems in the limit $\epsilon \rightarrow 0$, namely of $(MM)_0$ resp. $(La)_0$. 
On the other hand, for the IMEX-scheme  $cond(\mathcal{A 
})$ is proportional to $1/\epsilon$ (slope of the curve is approx. $-1$). This 
circumstance is the translation on the discrete level of the fact that 
the reduced model \eqref{Red}, obtained on the continuous level by letting 
formally $\epsilon \rightarrow 0$ in the IMEX time-discretization, is 
ill-posed, admitting an infinite amount of solutions.\\

\begin{figure}[h]
\begin{center}
       \includegraphics[width=0.45\textwidth]{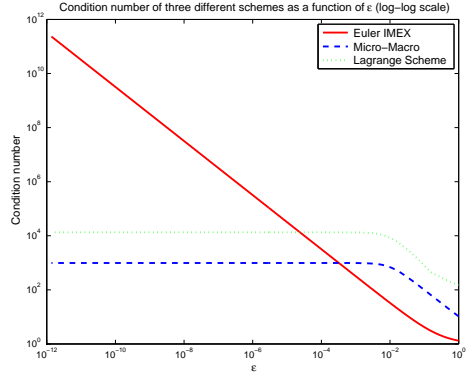}
       \label{cond_A}
\end{center}
\caption{\small{Condition number $cond(A)$ as a function of $\epsilon$ in  log-log scale. The three 
curves correspond to the IMEX, Micro-Macro and Lagrange-multiplier schemes.}}
\label{cond}
\end{figure}


However, even if these arguments show clearly that the IMEX-method 
should behave badly for very small $\epsilon$-values, it is not the case in our 
simplified toy model, in particular it does not seem to be affected by 
the bad condition number. This will no more be the case in our second toy-model. To understand in detail what happens, a more refined error study could be profitable and shall be done in the next section.  The final interpretation is postponed to Section \ref{Disc} after having estimated the truncation error. One can only say here that the functioning of the IMEX-scheme is due to the fact that the 
investigated problem is very simple and specifically the anisotropy is 
aligned with the Cartesian mesh.  

\section{Numerical analysis} \label{SEC25}

Let us now perform a numerical analysis study of our schemes introduced for the resolution of \eqref{AV}, permitting to understand in detail the behavior observed in the last section. In particular we shall detail only the error-analysis of the standard IMEX-scheme and the {\it Asymptotic-Preserving} Lagrange-multiplier scheme. The error study of the other schemes is very similar. See \cite{holmes,leveque} for more details on this analysis part.

\subsection{ IMEX scheme}
We begin by recalling  the full discretized form of the IMEX scheme :
\be \label{El_2}
(IMEX)_\epsilon \quad\,\, {f^{\epsilon,n+1}_{i,j}-f^{\epsilon,n}_{i,j} \over \Delta t} + a \; \frac{f_{i,j}^{\epsilon,n} - f_{i-1,j}^{\epsilon,n}}{\Delta x} + \frac{b}{\epsilon} \;  \frac{f_{i,j}^{\epsilon,n+1}-f_{i,j-1}^{\epsilon,n+1}}{\Delta y}=0\,,  \quad \forall (n,i,j) \in  Q_{h} \,.
\ee
\debthm \label{th_i}
The IMEX scheme \eqref{El_2} is consistent with the Vlasov equation \eqref{AV}, and first order accurate in space and time. Furthermore, the local truncation error writes
$$
\mathcal{T}_I(t_n,x_i,y_j,\Delta t,\Delta x,\Delta y) = -\nabla \cdot({D_I \nabla f^{\epsilon}}) +  \mathcal{O}(\Delta t ^2)  + \mathcal{O}(\Delta x ^2) + \mathcal{O}(\Delta y ^2),
$$
with 
$$
D_I := \left(\begin{array}{cc}\ds \frac{a \Delta x}{2}(1-\alpha)& 0 \\0 & \ds  \frac{b \Delta y }{2 \epsilon}\Big(1+ \frac{\beta}{\epsilon}\Big)\end{array}\right), \quad \ds \alpha := \frac{a \Delta t}{\Delta x}, \quad \ds \beta := \frac{b \Delta t}{\Delta y}.
$$

Finally, we observe that the IMEX scheme \eqref{El_2} is a second-order scheme for the modified Vlasov equation
\be \label{modified}
\partial_t g^{\epsilon} + a \; \partial_x g^{\epsilon} + \frac{b}{\epsilon} \; \partial_y g^{\epsilon}  - \frac{a\Delta x}{2} (1-\alpha) \; \partial_{xx} g^{\epsilon} - \frac{b \Delta y}{2 \epsilon}(1+ \frac{\beta}{\epsilon}) \; \partial_{yy} g^{\epsilon}=0.
\ee
\finthm

\debproof
The local truncation error of the method \eqref{El_2} is defined by
\begin{align*}
\mathcal{T}_I(t,x,y,\Delta t,\Delta x, \Delta y) = & {f^{\epsilon}(t+\Delta t,x,y)-f^{\epsilon}(t,x,y) \over \Delta t} + a \; \frac{f^{\epsilon}(t,x,y) - f^{\epsilon}(t,x-\Delta x,y)}{\Delta x}   \\ +  &\frac{b}{\epsilon}  \;\frac{f^{\epsilon}(t+\Delta t,x,y)-f^{\epsilon}(t+\Delta t,x,y-\Delta y)}{\Delta y}\,.
\end{align*}
\noindent Supposing that $f^{\epsilon}$ is sufficiently smooth in order to apply a Taylor development, we find 
\begin{align*}
\mathcal{T}_I(t_n,x_i,y_j,\Delta t,\Delta x,\Delta y) = &\partial_t f^{\epsilon} + \frac{\Delta t}{2} \; \partial_{tt} f^{\epsilon} + \frac{b \Delta t }{\epsilon} \; \partial_{yt}f^{\epsilon} + a \; \partial_x f^{\epsilon} - \frac{a \Delta x}{2} \; \partial_{xx}f^{\epsilon} +\frac{b}{\epsilon} \; \partial_{y}f^{\epsilon}\\ & - \frac{b\Delta y}{2 \epsilon}  \;\partial_{yy} f^{\epsilon}  + \mathcal{O}(\Delta t ^2)  + \mathcal{O}(\Delta x ^2) + \mathcal{O}(\Delta y ^2),
\end{align*}
\noindent where $f^{\epsilon}$ is taken in $(t_n,x_i,y_j)$. Since $f^{\epsilon}$ satisfies the Vlasov equation \eqref{AV}, the $\mathcal{O}(1)$ terms drop out.  Moreover, by differentiating the Vlasov equation along $t$, $y$ and $x$, we express the partial derivatives $\partial_{tt} f$ and $\partial_{ty}f$ as functions of $\partial_{xx} f$ and $\partial_{yy} f$. We find thus
$$
\partial_{tt}f^{\epsilon} = a^2 \; \partial_{xx}f^{\epsilon} + 2 \frac{ab}{\epsilon} \; \partial_{xy} f^{\epsilon} + \frac{b^2}{\epsilon ^2} \partial_{yy}f^{\epsilon}, \quad \partial_{yt}f^{\epsilon} = - a \; \partial_{xy}f^{\epsilon} - \frac{b}{\epsilon} \; \partial_{yy} f^{\epsilon}.
$$
\noindent   The local truncation error writes finally
$$
\mathcal{T}_I(t_n,x_i,y_j,\Delta t, \Delta x, \Delta y)= -\frac{a\Delta x}{2} (1-\alpha) \; \partial_{xx} f^{\epsilon} - \frac{b \Delta y}{2 \epsilon}(1+ \frac{\beta}{\epsilon}) \; \partial_{yy} f^{\epsilon} + \mathcal{O}(\Delta t ^2,\Delta x ^2,\Delta y ^2).
$$
\finproof

\debrem
The modified equation \eqref{modified} is an advection/diffusion equation. Note that the diffusion is stronger in the $y$-direction due to the term $ 1/\epsilon$. These diffusion terms are responsible for the damping that we observed in the numerical simulations (see Fig. \ref{figure4} (B)), damping which tends towards infinity in the $y$-direction, as $\epsilon \to 0$. Note also that the diffusion coefficient is positive if $\alpha \leq 1$. This is precisely the stability condition of the upwind scheme, as we will see afterwards. If this condition is not respected, the diffusion becomes negative, leading to an ill-posed problem with exponentially growing solutions.
\finrem

\debthm
The IMEX scheme is stable in the Von Neumann sense if and only if  the CFL-condition $\ds \frac{a \Delta t}{\Delta x} \leqslant 1$ is satisfied.
\finthm

\debproof To study the stability of our scheme, let us inject in \eqref{El_2} for fixed $n \in \NN$ a plane wave of the form 
\begin{align*}
f^{\epsilon,n}_{i,j} =  e^{ {\mathbf i} k x_i} e^{{\mathbf i} l y_j} \qquad  \forall (i,j) , 
\end{align*}
with $ \ds k,l \in \ZZ$ two arbitrary modes, and look how it evolves from one time-step to the other. Let us denote by $\xi_I$  the amplification factor for this passage $t_n \rightarrow t_{n+1}$, meaning
\begin{align*}
f_{i,j}^{\epsilon,n+1} = \xi_I\, f_{i,j}^{\epsilon,n}=\xi_I \; e^{{\mathbf i} k x_i} e^{{\mathbf i} l y_j}\,, \qquad  \forall (i,j) .
\end{align*}
Inserting now these terms in the discretized equation \eqref{El_2}, yields, after simplification
\begin{align*}
\xi_I \Bigg [1 + \frac{b \Delta t}{\epsilon \Delta y}(1-e^{-{\mathbf i}l \Delta y}) \Bigg] = \Bigg [ 1 - \frac{a \Delta t}{\Delta x}(1- e^{-{\mathbf i} k \Delta x}) \Bigg].
\end{align*}

\noindent A scheme is said to be stable in the Von Neumann sense, if the amplification factor satisfies $|\xi_I| \leq 1$,  such that the modes are not amplified from one time-step to the other. Straightforward computations yield 
\begin{align*}
|\xi_I | =  \epsilon \sqrt{ \frac{1 -4 \alpha (1-\alpha) \,\sin^2 \Big(\frac{k\, \Delta x}{2} \Big)}{\epsilon^2+ 4 \beta (\epsilon + \beta)\, \sin^2 \Big(\frac{l\,\Delta y}{2} \Big)}}\,, \quad \forall k,l \in \ZZ.
\end{align*}
\noindent Then, a necessary and sufficient condition to have the Von Neumann stability is : 
\begin{align*}
\frac{a \Delta t}{\Delta x} \leqslant 1.
\end{align*}
\finproof
\debrem
Note that in the case $l\neq 0$, when $\epsilon$ tends towards $0$, the amplification factor converges towards $0$. This means that for injected waves with mode $l\neq 0$, the scheme becomes more and more diffusive and attenuates completely the oscillations. 
\finrem
\subsection{Lagrange-multiplier scheme}
We do now the same work for the Lagrange-multiplier scheme, {\it i.e.}
\be \label{La_2}
(La)_\epsilon \quad \left\{
\begin{array}{l}
\ds {f^{\epsilon,n+1}_{i,j}-f^n_{i,j} \over \Delta t} + a \; \frac{f_{i,j}^{\epsilon,n} - f_{i-1,j}^{\epsilon,n}}{\Delta x} + b \; \frac{q_{i,j}^{\epsilon,n+1}-q_{i,j-1}^{\epsilon,n+1}}{\Delta y}=0\,, \quad \forall (n,i,j) \in Q_h\\[6mm]
\ds  \frac{f_{i,j}^{\epsilon,n+1}-f_{i,j-1}^{\epsilon,n+1}}{\Delta y} = \epsilon \; \frac{q_{i,j}^{\epsilon,n+1}-q_{i,j-1}^{\epsilon,n+1}}{\Delta y}\,, \quad \forall (n,i,j) \in Q_h \\[6mm]
\ds q^{\epsilon,n}_{i,1} = 0\,, \quad \forall (n,i) \in [0,N_t] \times [0,N_x].
\end{array}
\right.
\ee

\debthm
The Lagrange-multiplier scheme \eqref{La_2} is consistent with the Vlasov equation \eqref{AV}, and first order accurate in space and time. Furthermore, the local truncation error  writes 
$$
\mathcal{T}_L(t_n,x_i,y_j,\Delta t, \Delta x, \Delta y) = -\nabla \cdot({D_L \nabla f^{\epsilon}})  + \mathcal{O}(\Delta t ^2)  + \mathcal{O}(\Delta x ^2) + \mathcal{O}(\Delta y ^2),
$$
\noindent with 
$$
D_L := \left(\begin{array}{cc}\ds \frac{a \Delta x}{2}(1-\alpha)& 0 \\0 & \ds  \frac{b \Delta y }{2 \epsilon}\Big(1+ \frac{\beta}{\epsilon}\Big)\end{array}\right), \quad \ds \alpha := \frac{a \Delta t}{\Delta x}, \quad \ds \beta := \frac{b \Delta t}{\Delta y}.
$$
\finthm

\debproof
In order to prove the result, we write the local truncation error of the first equation. We find that
\begin{align*}
\mathcal{T}_{L}(t_n,x_i,y_j,\Delta t, \Delta x, \Delta y) = &\frac{\Delta t}{2} \partial_{tt} f^{\epsilon}   - a \frac{\Delta x}{2} \partial_{xx}f^{\epsilon} - b \frac{\Delta y}{2} \partial_{yy} q^{\epsilon} +  b \Delta t \partial_{yt} q^{\epsilon}    \\ &+ \mathcal{O}(\Delta t ^2)  + \mathcal{O}(\Delta x ^2) + \mathcal{O}(\Delta y ^2).
\end{align*}

\noindent  Since the first equation of \eqref{La_2} is verified by $(f^{\epsilon},q^{\epsilon})$, we have
$$
\partial_{tt} f^{\epsilon} = -a \partial_{xt}f^{\epsilon} - b \partial_{yt} q^{\epsilon}, \quad \partial_{xt} f^{\epsilon} = - a \partial_{xx} f^{\epsilon} - b \partial_{xy} q^{\epsilon}\,, \quad \partial_{ty}f^{\epsilon} = - a\partial_{xy}f^{\epsilon} -b \partial_{yy}q^{\epsilon}\,.
$$
\noindent Then, 
\begin{align*}
\mathcal{T}_{L}(t_n,x_i,y_j,\Delta t, \Delta x, \Delta y) = &\frac{\Delta t}{2} (a^2 \partial_{xx} f^{\epsilon}+ ab \partial_{xy}q^{\epsilon})   - a \frac{\Delta x}{2}\partial_{xx} f^{\epsilon} - b \frac{\Delta y}{2}  \partial_{yy}q^{\epsilon} +  b\frac{\Delta t}{2} \partial_{ty} q^{\epsilon}  \\ &+ \mathcal{O}(\Delta t ^2)  + \mathcal{O}(\Delta x ^2) + \mathcal{O}(\Delta y ^2).
\end{align*}

\noindent Since the second equation of \eqref{La_2} is verified, we have  
$$
\partial_{ty}q^{\epsilon} = \frac{1}{\epsilon} \partial_{ty}f^{\epsilon}, \quad \partial_{yy}q^{\epsilon} = \frac{1}{\epsilon} \partial_{yy}f^{\epsilon}, \quad \partial_{xy}q^{\epsilon} = \frac{1}{\epsilon} \partial_{xy}f^{\epsilon}, \quad 
$$
\noindent such that we find  the same expression as for the IMEX scheme, {\it i.e.}
$$
\mathcal{T}_L(t_n,x_i,y_j,\Delta t , \Delta x, \Delta y) = -\nabla \cdot({D_L \nabla f^{\epsilon}}) +  \mathcal{O}(\Delta t ^2)  + \mathcal{O}(\Delta x ^2) + \mathcal{O}(\Delta y ^2).
$$
\finproof

\noindent The just proved  result confirms what we have seen on the numerical plots. Indeed, the IMEX and Lagrange-multiplier schemes have the same behavior when regarding the convergence and the asymptotic behavior.

\debthm
The Lagrange-multiplier scheme is stable in the Von Neumann sense if and only if the CFL condition $\ds \frac{a \Delta t}{\Delta x} \leqslant 1$ is satisfied.
\finthm

\debproof
Here, we have two unknown functions $f^{\epsilon}$ and $q^{\epsilon}$. To study the Von Neumann stability, we write 
$$
q^{\epsilon,n+1}_{i,j} = \xi_q \; q^{\epsilon,n}_{i,j}\,, \quad  f^{\epsilon,n+1}_{i,j} = \xi_f \; f^{\epsilon,n}_{i,j}\,,
$$
with the two amplification factors $\xi_q$ and $\xi_f$. As usual, we insert these expressions in the discretized Lagrange-multiplier equations. We obtain a linear system where the unknowns are $\xi_q$ and $\xi_f$. This system writes 
$$
\left(\begin{array}{cc}1 & \beta \big(1-e^{-{\mathbf i}k_m \Delta y}\big)  \\1 & -\epsilon \end{array}\right)\left(\begin{array}{c}\xi_f \\ \xi_q\end{array}\right) = \left(\begin{array}{c} \alpha\big(1-e^{-{\mathbf i}k_n \Delta x}\big)\\0\end{array}\right)\,,
$$
and is easy to invert. Computing the amplification factor $\xi_f$, we remark that it is identical to the one calculated for the IMEX scheme. 
\finproof
\subsection{AP-properties} \label{Disc}
We are now able to explain the numerical results obtained in Section \ref{SEC24}, in particular to explain why the IMEX-scheme, even if being not an AP-scheme, gives in this simple field-aligned test case, good results up to a value of $\epsilon=10^{-14}$. For this, let us recall that two types of errors arise during a numerical resolution of the Vlasov equation \eqref{AV}. First of all we have the truncation errors, estimated in the last subsections, and secondly one has also the round-off errors, arising at each elementary computation. To be more precise, one has to consider the three linear systems, corresponding to \eqref{SYST_IM}:
$$
\mathcal{A } \;  \mathcal{F}_{ex} = \mathcal{B} + \epsilon \mathcal{T}\,, \qquad \mathcal{A } \;  \mathcal{F} = \mathcal{B}\,, \qquad (\mathcal{A }+ \delta \mathcal{A }) \;  \mathcal{F}_{num} = \mathcal{B}+ \delta \mathcal{B}\, ,
$$
where to simplify notation we omitted all the time and space indices. Here we denoted by $\mathcal{F}_{ex}$ the exact solution of the Vlasov equation \eqref{AV}, satisfying the linear system \eqref{SYST_IM} up to a truncation error $\mathcal{T}$, $\mathcal{F}$ is the exact solution of the linear system \eqref{SYST_IM}, supposing exact arithmetics, and finally $\mathcal{F}_{num}$ is the solution to the linear system \eqref{SYST_IM} obtained via a computer, hence contaminated with round-off errors. The error we are interested in, can be estimated as follows
$$
||\mathcal{F}_{ex}-\mathcal{F}_{num}||\le ||\mathcal{F}_{ex}-\mathcal{F}||+ ||\mathcal{F}-\mathcal{F}_{num}||\,.
$$
Stability and consistency permit to show that the first error term is of the order of the truncation error. For the estimate of the second error term, we have to take into account the condition number of the matrix, in particular one has the estimate \cite{Tref}
$$
{||\mathcal{F}-\mathcal{F}_{num}||\over ||\mathcal{F}||} \le { \textrm{cond}(\mathcal{A}) \over 1-||\mathcal{A }^{-1}||\,||\delta \mathcal{A}||  } \, \left( {||\delta \mathcal{A}||\over ||\mathcal{A}||}+ {||\delta \mathcal{B}||\over ||\mathcal{B}||} \right) \,.
$$
Performing our computations in double precision (machine accuracy of $10^{-16}$), and as long as the condition number is not exceeding a value of $10^{12}$ (see Fig. \ref{cond}), the second error term is not so dangerous. For larger condition numbers, this term can give rise to erroneous results. In our test case, it is however rather the first error-term which leads to trouble, as the truncation error is $1/\epsilon$-dependent. In the first toy-model \eqref{AV}, the large truncation error impacts only  the $y$-direction, leading to a large diffusion along the axes-aligned anisotropy and hence to the limit-model. We shall see a drastic difference in the second, not-field aligned toy-model.

\section{Second Vlasov toy-model with variable coefficients}\label{SEC26}

Finally, let us come now in this section to the second Vlasov toy model, given by : 
\be \label{eq_cir_0}
\partial_t f^{\epsilon} + \frac{1}{\epsilon}(\mathbf{v} \times \mathbf{B})\cdot{\nabla_v f^{\epsilon}}=0, 
\ee
with $\epsilon \ll 1$ and the magnetic field $\ds \mathbf{B} = \mathbf{e_z}$. This model is a simplified version of the anisotropic Vlasov equation \eqref{V_C} in not-field aligned Cartesian coordinates. Denoting, for notational simplicity, the velocity-variable as $v=(x,y,z)$, we have $\mathbf{v}\times \mathbf{B} = (y \;,  -x \;,  0)^t$, such that the previous equation writes :
\be \label{eq_cir}
(G)_\epsilon\,\,\, 
\left\{
\begin{array}{l}
\ds \partial_t f^{\epsilon} + \frac{y}{\epsilon} \partial_x f^{\epsilon} - \frac{x}{\epsilon}\partial_y f^{\epsilon} = 0, \quad \quad \forall (t,x,y) \in [0,T] \times \Omega. \\[3mm]
f^{\epsilon}(0,x,y) = f_{in}(x,y), \quad \quad \forall (x,y) \in \Omega\,,
\end{array}
\right.
\ee
where this time our velocity-domain is given by $\Omega:=[-L_x,L_x]\times [-L_y,L_y]$. Again we will consider a doubly-periodic framework.

\subsection{Exact solution by the characteristic method}
The exact solution of the equation \eqref{eq_cir} is simply determined  via the characteristic method. The characteristic curve $\mathcal{C}_{\epsilon}^{x,y}(s) := \Big(X(s), Y(s)\Big)$ passing at instant $t$ through $(x,y)$, solves the ODE : 
$$
\left\lbrace
\begin{array}{ll}
\displaystyle \dot{X}(s) = \frac{Y(s)}{\epsilon}, \\ \\
\displaystyle \dot{Y}(s) = - \frac{X(s)}{\epsilon},
\end{array}\right.
\label{charac}
\quad \quad  (X(t),Y(t)) = (x,y).
$$
\noindent We can write this system under matrix form : 
$$
\left(\begin{array}{c}\dot{X} \\\dot{Y}\end{array}\right)  = \frac{1}{\epsilon} \, A\, \left(\begin{array}{c}X \\Y\end{array}\right), \qquad A:=\left(\begin{array}{cc}0 & 1 \\-1 & 0\end{array}\right)
$$
\noindent leading to 
$$
\mathcal{C}_{\epsilon}^{x,y}(s) := \ds \left(\begin{array}{c}X \\Y\end{array}\right)(s)  =  e^{A \frac{s-t}{\epsilon}} \left(\begin{array}{c}x \\y\end{array}\right).
$$
Denoting the rotation matrix by $ \mathcal{R}_{\epsilon}(y) := \ds e^{A \frac{y}{\epsilon}}$, one has
$$
\mathcal{R}_{\epsilon}(s-t) = e^{A \frac{s-t}{\epsilon}} =
\left(\begin{array}{cc}\ds \cos\Big(\frac{s-t}{\epsilon}\Big) & \ds \sin \Big(\frac{s-t}{\epsilon}\Big) \\ \\\ds -\sin \Big(\frac{s-t}{\epsilon}\Big) & \ds \cos \Big(\frac{s-t}{\epsilon}\Big)\end{array}\right) \,.
$$
\noindent We can easily verify that the characteristic curve passing through the point $(x,y)$ is a spiral, whose projection on the $(x,y)$-plane is the circle with radius $R:= \sqrt{x^2+y^2}$ and center $(0,0)$. All characteristics are $2\, \pi\, \epsilon$-periodic (in $t$).

\noindent The exact solution $f^{\epsilon}$ of \eqref{eq_cir} is now simply the advection of the initial condition along these characteristic curves, such that
$$
f^{\epsilon}(t,x,y) = f_{in}(X(0,t,x,y),Y(0,t,x,y))= f_{in} \Big(\cos \Big( \frac{t}{\epsilon} \Big)x - \sin \Big(\frac{t}{\epsilon}\Big)y,\sin \Big(\frac{t}{\epsilon}\Big)x + \cos \Big( \frac{t}{\epsilon} \Big)y \Big).
$$
\subsection{Limit solution of the problem}
The next step is to obtain the limit solution of the problem \eqref{eq_cir}, as $\epsilon \to 0$. Keeping in mind that $f^{\epsilon}$ is constant along the characteristic curves, we integrate \eqref{eq_cir} along $\mathcal{C}_{\epsilon}^{x,y}$, to get 
$$
\partial_t \int_{\mathcal{C}_{\epsilon}^{x,y}}  f^{\epsilon} d \sigma + \frac{1}{\epsilon} \int_{\mathcal{C}_{\epsilon}^{x,y}} (y, \, -x)^t \cdot{\nabla f^{\epsilon}}(t,x,y)d \sigma= 0,
$$
\noindent leading to 
$$
\partial_t \int_{\mathcal{C}_{\epsilon}^{x,y}}  f^{\epsilon} d\sigma + \frac{1}{\epsilon} \int_{t}^{t+2\pi \epsilon} (Y(s),\, \quad -X(s))^t \cdot{\nabla f^{\epsilon}}(t,X(s),Y(s))\,{\sqrt{x^2+y^2}\over \epsilon}ds = 0\,.
$$
Furthermore, as
$$
\int_{t}^{t+2\pi \epsilon} (Y(s),\, -X(s))^t \cdot{\nabla f^{\epsilon}}(t,X(s),Y(s))ds = \int_t^{t+2\pi \epsilon}{d\over ds} \Big[f^{\epsilon}\Big(t,X(s),Y(s))\Big)\Big] =0,
$$
\noindent which comes from the periodicity of the characteristics, and denoting the average along a curve by $\ds \langle {f^{\epsilon}} \rangle := \frac{1}{|\mathcal{C}_{\epsilon}^{x,y}|} \int_{\mathcal{C}_{\epsilon}^{x,y}}f^{\epsilon}d\sigma$, with $|\mathcal{C}_{\epsilon}^{x,y}|=2\, \pi\, \epsilon$, we have :
$$
\partial_t \langle f^{\epsilon} \rangle =0\,.
$$
\noindent Letting now formally $\epsilon \to 0$, we obtain the following limit problem associated to \eqref{eq_cir}:
\be \label{eq_cir_lim} 
(G)_0\,\,\qquad 
\langle f^{0} \rangle = \langle f_{in}\rangle.
\ee

\subsection{Numerical schemes for the second Vlasov toy model }
Let us now discretize the second Vlasov toy model \eqref{eq_cir} via the IMP (fully implicit scheme this time) and Lagrange-multiplier schemes. The time semi-discretizations read
\be \label{st_IMEX}
(IMP)_{\epsilon}\quad
\frac{f^{\epsilon,n+1}-f^{\epsilon,n}}{\Delta t} +\frac{y}{\epsilon} \;  \partial_x f^{\epsilon,n+1} - \frac{x}{\epsilon}  \;\partial_y f^{\epsilon,n+1} =0, \quad \quad \forall n \geqslant 0, \quad \forall (x,y) \in \Omega,
\ee 
as well as
\be \label{Lag_cir_0}
(La)_\epsilon \quad \left\{
\begin{array}{l}
\ds  \frac{f^{\epsilon,n+1}-f^{\epsilon,n}}{\Delta t} + y \;  \partial_x q^{\epsilon,n+1} - x \; \partial_y q^{\epsilon,n+1} = 0,\\[6mm]
\ds y \; \partial_x f^{\epsilon,n+1} - x\; \partial_y f^{\epsilon,n+1} = \epsilon \Big(y\;  \partial_x q^{\epsilon,n+1} - x \; \partial_y q^{\epsilon,n+1}\Big)-  (\Delta x \Delta y)^\gamma \; q^{\epsilon,n+1}
\end{array}
\right.
 \forall n \geqslant 0\,.
\ee
\noindent The term $(\Delta x \Delta y)^\gamma \; q^{\epsilon,n}$ in \eqref{Lag_cir_0} is a stabilization term permitting to have the uniqueness of the solution $(f^{\epsilon}, q^{\epsilon})$. In the former "field-aligned" example, we fixed $q^{\epsilon}$ on the anisotropy lines by setting $q^{\epsilon}_{|\Gamma_{in}}=0$, but here it is more arduous from a numerical point of view. The stabilization aims equally to fix $q^\epsilon$, however in a different manner. It is very delicate to choose the magnitude of this term, in order not to destroy the problem, in particular we took here $\gamma=0.91$. First it is a small perturbation of the equation, of the order of the truncation error. Secondly, averaging the second equation of the Lagrange-multiplier scheme along the anisotropy lines, permits to obtain
\begin{align*}
(\Delta x \Delta y)^\gamma \;  \langle q^{\epsilon,n+1} \rangle = 0, 
\end{align*}
\noindent which means that $q^{\epsilon}$ is unique, by having zero average along the field lines. A more detailed study of this stabilization technique was performed in \cite{LNN} for the elliptic framework.

\noindent For the spatial discretization, we use again an upwind scheme, observing that this time the equation has no more constant coefficients. Thus, we define :
$$
x_i^{+} := \max_i (x_i,0), \quad x_i^{-} := \min_i(0,x_i), \quad y_j^{+} := \max_j (y_j,0), \quad y_j^{-}=\min_j(0,y_j),  \; \forall (i,j) \in \mathbb{N}^2.
$$
\noindent The full discretization of the IMP scheme writes now
\begin{align*} \label{eq_euler}
(IMP)_{\epsilon}\,\,\, 
& f^{\epsilon,n+1}_{i,j}+ \frac{1}{\epsilon}\Big[\big(r_x (y_j^{+}-y_j^{-})+r_y(x_i^{+}-x_i^{-} \big)f_{i,j}^{\epsilon,n+1} - r_x (y_j^{+}f^{\epsilon,n+1}_{i-1,j} - y_j^{-} f^{\epsilon,n+1}_{i+1,j}) - \\  & r_y(x_i^{+}f^{\epsilon,n+1}_{i,j+1} - x_i^{-}f^{\epsilon,n+1}_{i,j-1})\Big] = f^{\epsilon,n}_{i,j}, \quad \forall (n,i,j) \in Q_h,
\end{align*}
\noindent with $ \ds r_x = \frac{\Delta t}{\Delta x}$ and $ \ds r_y = \frac{\Delta t}{\Delta y}$. And for the Lagrange-multiplier scheme, we have : 
$$
(La)_\epsilon\,\,\, 
\left\lbrace
\begin{array}{ll}
\displaystyle f^{\epsilon,n+1}_{i,j}+ \frac{1}{\epsilon}\Big[\big(r_x (y_j^{+}-y_j^{-})+r_y(x_i^{+}-x_i^{-} \big)q_{i,j}^{\epsilon,n+1} - r_x (y_j^{+}q^{\epsilon,n+1}_{i-1,j} - y_j^{-} q^{\epsilon,n+1}_{i+1,j}) - \\[3mm] r_y(x_i^{+}q^{\epsilon,n+1}_{i,j+1} - x_i^{-}q^{\epsilon,n+1}_{i,j-1})\Big] = f^{\epsilon,n}_{i,j},  \quad \forall (n,i,j) \in Q_h, \\ \\
\displaystyle \frac{1}{\Delta t}\Big[\big(r_x (y_j^{+}-y_j^{-})+r_y(x_i^{+}-x_i^{-} \big)f_{i,j}^{\epsilon,n+1} - r_x (y_j^{+}f^{\epsilon,n+1}_{i-1,j} - y_j^{-} f^{\epsilon,n+1}_{i+1,j}) - \\[3mm]  r_y(x_i^{+}f^{\epsilon,n+1}_{i,j+1} - x_i^{-}f^{\epsilon,n+1}_{i,j-1})\Big] 
= \displaystyle \frac{\epsilon}{\Delta t}\Big[\big(r_x (y_j^{+}-y_j^{-})+r_y(x_i^{+}-x_i^{-} \big)q_{i,j}^{\epsilon,n+1} - \\[3mm] r_x (y_j^{+}q^{\epsilon,n+1}_{i-1,j} - y_j^{-} q^{\epsilon,n+1}_{i+1,j}) - r_y(x_i^{+}q^{\epsilon,n+1}_{i,j+1} - x_i^{-}q^{\epsilon,n+1}_{i,j-1})\Big] - (\Delta x \Delta y)^\gamma \; q^{\epsilon,n+1}_{i,j},  \quad \forall (n,i,j) \in Q_h.
\end{array}\right.
\label{charac}
$$
\subsection{Numerical simulations}
\noindent Here we present our simulations corresponding to both numerical schemes. We consider $\Omega = [-3,3]^2$, $T=1$ and the discretization parameters $N_t = 64$ and $N_x = N_y = 160$. The initial data is defined by a Gaussian function :
$$
f_{in}(x,y) = \exp \Bigg(-\frac{x^2 + y^2}{2 \sigma^2}\Bigg), \qquad \sigma = 0.5, \quad \forall(x,y) \in \Omega.
$$
As we showed before, the exact solution is known thanks to the characteristic method. In the present simple test case, one can easily prove that 
\be \label{EX_FF}
f_{ex}^{\epsilon}(t,x,y) = f_{in}(x,y) = \exp \Bigg(-\frac{x^2 + y^2}{2\sigma^2}\Bigg),
\ee
\noindent in other words, the exact solution is a stationary solution, independent of $\epsilon$, the initial condition being constant along the anisotropy field. This simple test case permits in a very simple way to compare both methods with respect to the $\epsilon$-dependence of the results, in particular to show that the IMP-scheme is not an {\it Asymptotic-Preserving} scheme. We shall investigate in a future paper a more involved, physical test-case, where we shall adapt the here introduced Lagrange-multiplier-method, which seems to be the most appropriate method for our singularly-perturbed Vlasov problem \eqref{V_C}, to second-order schemes and test more thoroughly its AP-properties.\\

In Figure \ref{cond_2toy} we first plot the condition-number $cond(A):=||\mathcal{A }^{-1}||_2\, ||\mathcal{A }||_2$ associated to the two schemes. As for the first toy-model, one remarks the $\epsilon$-independent condition-number of the Lagrange-multiplier-scheme, whereas, as expected, the IMP scheme has an $1/\epsilon$-dependent condition-number.
\begin{figure}[h]
\begin{center}
\psfrag{IMEX}[][][0.5]{IMP}
\includegraphics[scale = 0.5]{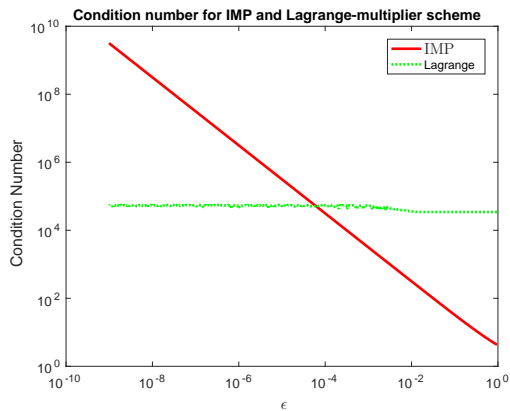} 
\end{center}
\caption{\small{Condition number $cond(A)$ as a function of $\epsilon$ in  log-log scale. The two 
curves correspond to the IMP and Lagrange-multiplier schemes.}}
\label{cond_2toy}
\end{figure}

\bigskip
\noindent Then, in Figure \ref{figure14}, we show the numerical solution $f^{\epsilon}$ at the final time $T$ and computed for several values of $\epsilon$, with both IMP and Lagrange-multiplier schemes. For $\epsilon =1$ and $\epsilon = 0.1$, we do not distinguish any difference. However for smaller $\epsilon$ values, the solution obtained with the Lagrange-multiplier scheme seems to be $\epsilon$-independent, contrary to the IMP scheme, which diffuses more and more as $\epsilon \to 0$.  Indeed, the IMP solution is completely damped as $\epsilon \to 0$ and leads towards the zero-solution, whereas the Lagrange-multiplier scheme keeps the form of the Gaussian, with a usual $\epsilon$-independent $(\Delta x, \Delta y)$-diffusion. This permits to conclude that the Lagrange-multiplier scheme is an AP-scheme contrary to the IMP scheme. 

\bigskip

In order to distinguish much better this AP-property, we plot on Figure \ref{figure_gauss} a cut of the previous curves at the point $x=0$. We observe clearly the diffusion in the IMP scheme which depends of $\epsilon$ contrary to the Lagrange-multiplier scheme. 

\begin{figure}[ht]
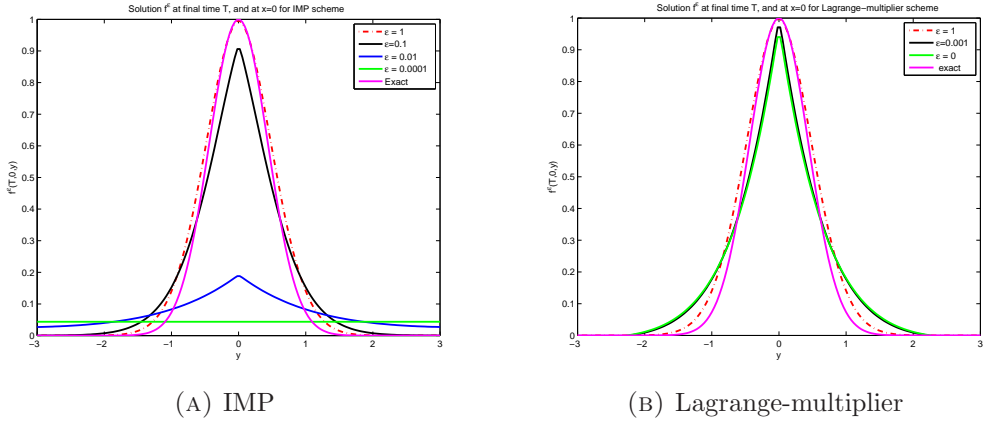

\centering
	\subfloat[IMP  ]{
      \includegraphics[width=0.44\textwidth]{imex_bis}}
      \label{f0}    
	\subfloat[Lagrange-multiplier]{
      \includegraphics[width=0.44\textwidth]{lag_bis}}
      \label{f_eps}   
      \caption{\small{Representation of a cut at $x=0$ of $f^{\epsilon}_{num}$ at the final time $T$ for the IMP and Lagrange-multiplier schemes, and several values of $\epsilon$.}}
\label{figure_gauss}
\end{figure}

\begin{figure}[ht]
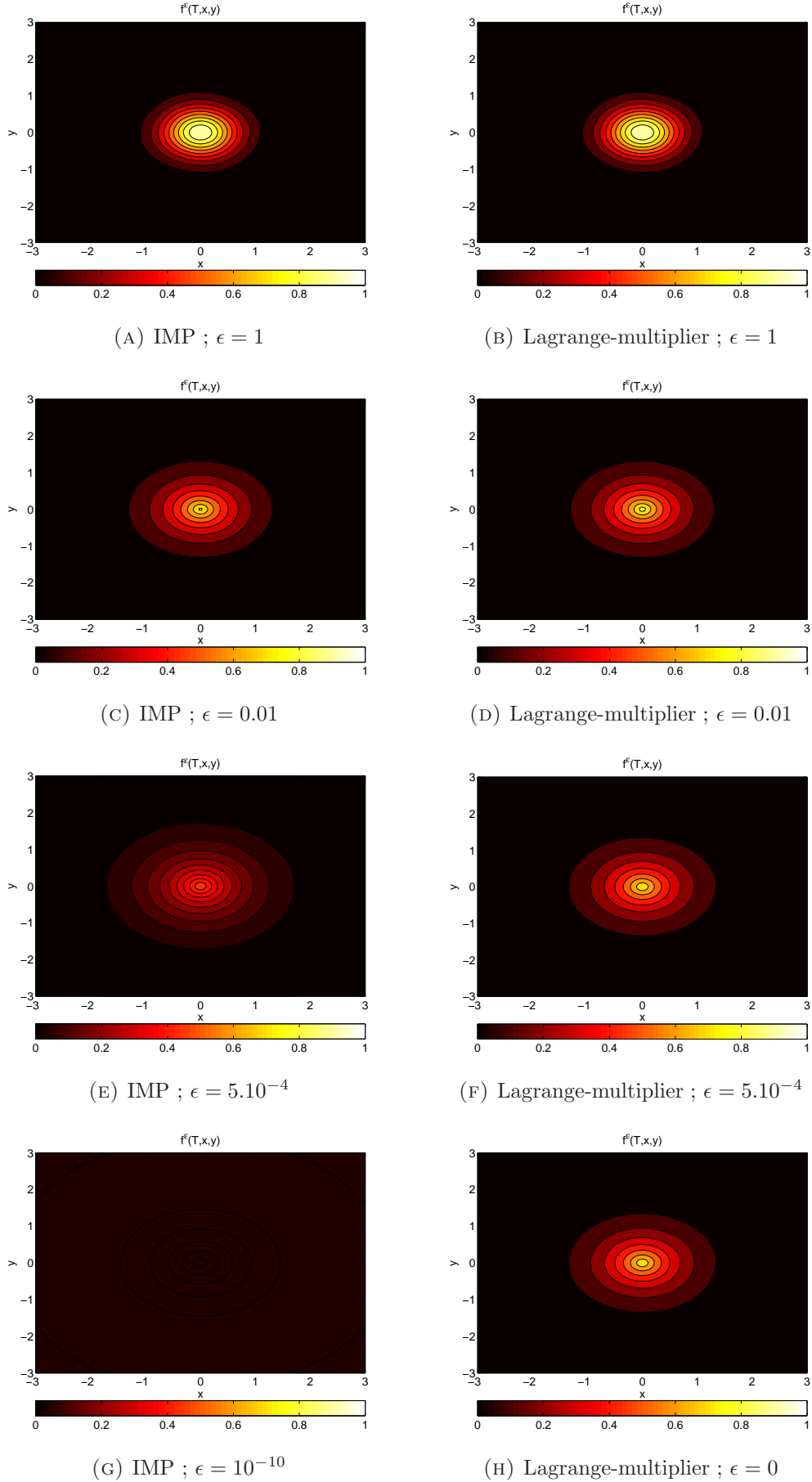

\centering
	\subfloat[IMP ; $\epsilon=1$ ]{
      \includegraphics[width=0.44\textwidth]{euler_1}}
      \label{f0}    
	\subfloat[Lagrange-multiplier ; $\epsilon=1$ ]{
      \includegraphics[width=0.44\textwidth]{lagv_1}}
      \label{f_eps} 
      \vfill
      \subfloat[IMP ; $\epsilon=0.01$]{
      \includegraphics[width=0.44\textwidth]{euler_0P0001}}
      \label{f0}    
	\subfloat[Lagrange-multiplier ; $\epsilon=0.01$]{
      \includegraphics[width=0.44\textwidth]{lagv_0P0001}}
      \label{f_eps} 
      \vfill
      \subfloat[IMP ; $\epsilon=5.10^{-4}$]{
      \includegraphics[width=0.44\textwidth]{euler_5}}
      \label{f0}    
	\subfloat[Lagrange-multiplier ; $\epsilon=5.10^{-4}$]{
      \includegraphics[width=0.44\textwidth]{lagv_0P01}}
      \label{f_eps} 
      \vfill
      \subfloat[IMP ; $\epsilon=10^{-10}$]{
      \includegraphics[width=0.44\textwidth]{euler_0}}
      \label{f0}    
	\subfloat[Lagrange-multiplier ; $\epsilon=0$]{
      \includegraphics[width=0.44\textwidth]{lagv_P0}}
      \label{f_eps} 
\caption{Representation of the function $f^{\epsilon}$ at the final time $T$ for the IMP and Lagrange-multiplier scheme, with several values of $\epsilon$.}
\label{figure14}
\end{figure}

\subsection{Numerical analysis}
The aim of this section is to explain the plots presented before. In particular we will investigate why the IMP scheme does not work for small $\epsilon$-values, whereas the Lagrange-multiplier scheme preserves the asymptotics. First of all, we compute the local truncation error of both schemes. We shall consider only the case $x \geq 0$ and $y \geq 0$, the remaining cases changing nothing in the following reasoning.
\subsubsection{IMP scheme}
We begin by recalling the expression of the full discretized expression of this scheme : 
\be \label{Euler_2}
(IMP)_\epsilon \quad\,\, {f^{\epsilon,n+1}_{i,j}-f^{\epsilon,n}_{i,j} \over \Delta t} + \frac{y_j}{\epsilon} \; \frac{f_{i,j}^{\epsilon,n+1} - f_{i-1,j}^{\epsilon,n+1}}{\Delta x} - \frac{x_i}{\epsilon}  \; \frac{f_{i,j+1}^{\epsilon,n+1}-f_{i,j}^{\epsilon,n+1}}{\Delta y}=0\,, \quad \forall (n,i,j) \in Q_h \,.
\ee

\debthm
The IMP scheme \eqref{Euler_2} is consistent with the second Vlasov problem \eqref{eq_cir_0}, first order accurate in time and in space. Moreover, the local truncation error writes
$$
\mathcal{T}_I (t_n,x_i,y_j,\Delta t, \Delta x,\Delta y) = - \nabla \cdot{ \Big[D_I \nabla f^{\epsilon}\Big] }+ \mathcal{O}(\Delta t ^2)  + \mathcal{O}(\Delta x ^2) + \mathcal{O}(\Delta y ^2).
$$
where 
$$
D_I:= {1 \over \epsilon}\, \left(\begin{array}{cc} \ds  \frac{y_j \Delta x}{2}\Bigg(1 + \frac{\alpha_j}{\epsilon}\Bigg) & \ds \frac{-x_iy_j \Delta t}{2 \epsilon}  \\ \ds \frac{-x_iy_j \Delta t}{2 \epsilon} &\ds  \frac{x_i \Delta y }{2}\Bigg(1 + \frac{\beta_i}{\epsilon}\Bigg)\end{array}\right), \quad \alpha_j := \frac{y_j \Delta t}{\Delta x}, \quad  \ds \beta_i := \frac{x_i \Delta t}{\Delta y}.
$$
%
\finthm

\debproof
This proof is very similar to the proof of Theorem \ref{th_i}.
\finproof

\debrem
Contrary to the first toy-model, where the diffusion was $1/\epsilon$-dependent only in  the anisotropy-direction, which was aligned with the coordinate system, in the present case, the diffusion-matrix is scaled by a $1/\epsilon$ factor, meaning that this time we have a very strong $1/\epsilon$-dependent diffusion in all directions. This large diffusion leads rapidly to a damping of the solution towards zero, as $\epsilon$ becomes smaller, and leads thus to completely erroneous results.
\finrem

\subsubsection{Lagrange-multiplier scheme}
We use the same reasoning for the Lagrange-multiplier scheme
\be \label{Lag_cir}
(La)_\epsilon \quad \left\{
\begin{array}{l}
\ds \partial_t f^{\epsilon} + y \;  \partial_x q^{\epsilon} - x \; \partial_y q^{\epsilon} = 0,\\[6mm]
\ds y \partial_x f^{\epsilon} - x\; \partial_y f^{\epsilon} = \epsilon \Big(y\;  \partial_x q^{\epsilon} - x \; \partial_y q^{\epsilon}\Big) - (\Delta x \Delta y)^\gamma \; q^{\epsilon}. 
\end{array}
\right.
\ee

\noindent  Supposing $y\geqslant0$ and $x\geqslant0$, we have the full discretization of $(La)_\epsilon$
\be \label{lag_2}
(La)_\epsilon \quad \left\{
\begin{array}{l}
\ds {f^{\epsilon,n+1}_{i,j}-f^{\epsilon,n}_{i,j} \over \Delta t} + y_j \; \frac{q_{i,j}^{\epsilon,n+1} - q_{i-1,j}^{\epsilon,n+1}}{\Delta x} -x_i \; \frac{q_{i,j+1}^{\epsilon,n+1}-q_{i,j}^{\epsilon,n+1}}{\Delta y}=0\,, \\[6mm]
\ds  y_j \; \frac{f_{i,j}^{\epsilon,n+1} - f_{i-1,j}^{\epsilon,n+1}}{\Delta x} -x_i \;  \frac{f_{i,j+1}^{\epsilon,n+1}-f_{i,j}^{\epsilon,n+1}}{\Delta y} = \epsilon \; \Bigg(y_j \; \frac{q_{i,j}^{\epsilon,n+1} - q_{i-1,j}^{\epsilon,n+1}}{\Delta x} -x_i  \;\frac{q_{i,j+1}^{\epsilon,n+1}-q_{i,j}^{\epsilon,n+1}}{\Delta y}\Bigg)\\ \ds \hspace{2cm}- (\Delta x \Delta y)^\gamma \; q^{\epsilon,n+1}_{i,j}. 
\end{array}
\right.
\ee

\debthm
The Lagrange-multiplier scheme \eqref{lag_2} is consistent with the second Vlasov model \eqref{eq_cir_0} and first order accurate in time and in space. Furthermore,  the local truncation error writes 
\begin{align*}
&\left(
\begin{array}{c}
\mathcal{T}_{L1} \\\mathcal{T}_{L2}
\end{array}
\right) = 
\left(
\begin{array}{cc}
\nabla \cdot & 0 \\
0 & \nabla \cdot \end{array}\right)
\, {\left(\begin{array}{cc}0 & D_{L_1} \\
D_{L_2} & -\epsilon D_{L_2}
\end{array}
\right)
\left(\begin{array}{c}
\nabla f^{\epsilon} \\ \nabla q^{\epsilon}
\end{array}
\right) } +\mathcal{O}(\Delta t ^2, \Delta x ^2,\Delta y ^2) \\[4mm]
&\hspace{1.7cm} =\left(
\begin{array}{c}
\displaystyle \nabla\,(D_{L_1} \, \nabla q^{\epsilon})\\[3mm]
\displaystyle \nabla\,(D_{L_2} \, \nabla f^{\epsilon}) - \epsilon \nabla\,(D_{L_2} \, \nabla q^{\epsilon})
\end{array}
\right)+\mathcal{O}(\Delta t ^2, \Delta x ^2,\Delta y ^2)
\end{align*}
where 
$$
D_{L1} :=  \left(\begin{array}{cc} \ds  \frac{y_j \Delta x}{2}\Bigg(1 + \frac{\alpha_j}{\epsilon}\Bigg) & \ds \frac{-x_iy_j \Delta t}{2 \epsilon}  \\ \ds \frac{-x_iy_j \Delta t}{2 \epsilon} &\ds  \frac{x_i \Delta y }{2}\Bigg(1 + \frac{\beta_i}{\epsilon}\Bigg)\end{array}\right), \quad D_{L2} = \left(\begin{array}{cc} \ds \frac{y_j \Delta x}{2} &0  \\ 0 &\ds \frac{x_i \Delta y}{2}\end{array}\right). 
$$
\finthm

\debproof
We begin by the computation of the $\mathcal{T}_{L1}$ term. Supposing sufficient regularity for the functions $f^{\epsilon}$ and $q^{\epsilon}$, we use Taylor series expansion to get
\begin{align*}
\mathcal{T}_{L1}(t_n,x_i,y_j,\Delta t, \Delta x, \Delta y) = &\frac{\Delta t}{2} \; \partial_{tt}f^{\epsilon}   + y_j \Delta t \; \partial_{xt}q^{\epsilon}-  \frac{y_j  \Delta x}{2} \;  \partial_{xx} q^{\epsilon}- \frac{x_i \Delta y}{2} \; \partial_{yy} q^{\epsilon}  \\ &- x_i \Delta t \; \partial_{ty}q^{\epsilon}  +  \mathcal{O}(\Delta t ^2)  + \mathcal{O}(\Delta x ^2) + \mathcal{O}(\Delta y ^2).
\end{align*}

\noindent Since the first equation \eqref{Lag_cir} is verified, we can write 
$$
\partial_{tt} f^{\epsilon} = -y_j \; \partial_{xt} q^{\epsilon} + x_i \; \partial_{yt} q^{\epsilon},
$$
\noindent And we differentiate in time the second equation of \eqref{Lag_cir} to obtain 
\begin{align*}
\mathcal{T}_{L1}(t_n,x_i,y_j,\Delta t, \Delta x, \Delta y) = &  \frac{\Delta t}{2\epsilon} \; \partial_t (y_j \; \partial_x f^{\epsilon}- x_i \; \partial_y f^{\epsilon})- \; \frac{y_j  \Delta x}{2} \partial_{xx} q^{\epsilon}- \frac{x_i \Delta y}{2} \; \partial_{yy} q^{\epsilon}  \\ & + \mathcal{O}(\Delta t ^2)  + \mathcal{O}(\Delta x ^2) + \mathcal{O}(\Delta y ^2).
\end{align*}

\noindent We have the following relations  
$$
\partial_{tx}f^{\epsilon} = x \; \partial_{yx} q^{\epsilon} - y \; \partial_{xx}q^{\epsilon} + \partial_y q^{\epsilon}, \quad \partial_{ty}f = x \; \partial_{yy} q^{\epsilon} - \partial_x q^{\epsilon} - y \; \partial_{xy} q^{\epsilon}.
$$

\noindent The local truncation error writes finally
\begin{align*}
\mathcal{T}_{L1}(t_n,x_i,y_j,\Delta t, \Delta x, \Delta y) = &  \frac{\Delta t}{2\epsilon}  (y_j \;  \partial_y q^{\epsilon}  + x_i \;  \partial_x q^{\epsilon} +2 x_i y_j \; \partial_{xy}q^{\epsilon})-  \; \frac{y_j \Delta x}{2}(1+\alpha_j /\epsilon) \; \partial_{xx} q^{\epsilon} \\ &- \frac{x_i \Delta y}{2}(1+\beta_i/\epsilon) \; \partial_{yy} q^{\epsilon}  + \mathcal{O}(\Delta t ^2)  + \mathcal{O}(\Delta x ^2) + \mathcal{O}(\Delta y ^2).
\end{align*}
\noindent With an analogous reasoning, we compute the truncation error of the second equation:

\begin{align*}
\mathcal{T}_{L2}(t_n,x_i,y_j,\Delta t, \Delta x,\Delta y) = &-  \; \frac{y_j \Delta x}{2} \; \partial_{xx} f^{\epsilon} -  \;  \frac{x_i\Delta y}{2} \; \partial_{yy} f^{\epsilon} - \epsilon \;  \Big(  \; \frac{y_j  \Delta x}{2} \; \partial_{xx} q^{\epsilon} + \frac{x_i \Delta y}{2} \;  \partial_{yy} q^{\epsilon} \Big)  \\&+ \mathcal{O}(\Delta t ^2)  + \mathcal{O}(\Delta x ^2) + \mathcal{O}(\Delta y ^2).
\end{align*}
\finproof

\debrem
In contrast to the first Vlasov toy-model \eqref{AV}, the IMP and Lagrange-multiplier schemes do not have the same behavior with respect to the local truncation error. More particularly, the dependence on $\epsilon$ is very different.  The IMP-scheme is diffusing in all directions, diffusion proportional to $1/\epsilon$. The only $1/\epsilon$-dependent diffusion in the Lagrange-multiplier scheme arises in relation with the auxiliary unknown $q^\epsilon$, {\it i.e.} in the term $\nabla\,(D_{L_1} \, \nabla q^{\epsilon})$. And one can immediately verify that the $1/\epsilon$-dependence arises only aligned with the anisotropy field lines, and not perpendicular to them. Indeed, one gets immediately for the diffusion along resp. perpendicular to the field lines:
$$
\begin{array}{lll}
\ds (y\,,\, -x)\, D_{L_1} \, (y\,,\, -x)^T&=&\ds   {y^3 \, \Delta x \over 2} + {x^3 \, \Delta y \over 2}+ { \Delta t \over 2} (x^2+y^2)^2\\[3mm]
\ds (x\,,\, y)\, D_{L_1} \, (x\,,\, y)^T&=&\ds  {x\, y \over 2} \left[x\, \Delta x + y\, \Delta y \right] \,.
\end{array}
$$
\finrem
\section{Concluding remarks}
To conclude, let us summarize here the knowledge we acquired about the resolution of anisotropic Vlasov equations of the type \eqref{V_C} arising in fusion plasma modelisation. Two types of techniques can be adopted from the beginning. One can decide to pass directly to polar coordinates in velocity and get hence a field-aligned formulation as for ex. \eqref{V_P_bis}. In this case, a simple IMEX-scheme is the most appropriate scheme to be used, being simple enough and giving rise to accurate results up a sufficiently small $\epsilon$-value. However, the disadvantage is that one has to change coordinate system, which can be rather cumbersome if the magnetic field is variable, in time and space.\\
The second technique is rather simple, as it avoids to pass to field-aligned coordinates and remains in a nice Cartesian framework. The drawback is that in this case it is no more sufficient to implicit the stiff term and take the other terms explicitly. Indeed, for small $\epsilon$-values (already $\epsilon =10^{-4}$), meaning strong magnetic fields as in tokamak plasmas, an IMEX scheme would lead to erroneous results. An Asymptotic-Preserving reformulation like our "Lagrange-multiplier-method" is more adequate, leading in the limit $\epsilon \to 0$ towards the right Limit-problem. This Lagrange-multiplier-method is indeed usable for all $\epsilon \ge 0$ and gives accurate and stable results independently on $\epsilon$.  However there is a disadvantage, namely the fact that it is more time-consuming, as it involves an additional unknown $q^\epsilon$.\\
Solving an anisotropic Vlasov equation of the type \eqref{V_C} needs hence an {\it a priori} decision, which of these two techniques to follow. 
The first technique is at the moment the basis of several codes.
The second technique has not be tested up to now, and its rigorous validation and comparison with the first one will be the aim of a forthcoming paper, in a more physical context.

\bigskip

\noindent {\bf Acknowledgments.} The authors would like to acknowledge support from the ANR PEPPSI  (Plasma Edge Physics and Plasma-Surface Interactions, 2013-2017). Furthermore, this work has been carried out within the framework of the EUROfusion Consortium and has received funding from the Euratom research and training program 2014-2018 under grant agreement No 633053. The views and opinions expressed herein do not necessarily reflect those of the European Commission.

\end{document}